\newlist{steps}{enumerate}{1}
\setlist[steps, 1]{label = Step \arabic*:}
\newtheorem{theorem}{\bf Theorem}[section]
\newtheorem{lemma}[theorem]{\bf Lemma}
\newtheorem{definition}[theorem]{\bf Definition}
\newtheorem{corollary}[theorem]{\bf Corollary}
\newtheorem{proposition}[theorem]{\bf Proposition}
\newtheorem{remark}[theorem]{\bf Remark}
\newtheorem{example}[theorem]{\bf Example}
\newcommand{\rme}{\mathrm{e}}
\newcommand{\rmi}{\mathrm{i}}
\newcommand{\rmd}{\mathrm{d}}
\newcommand{\defeq}{\mathrel{\mathop:}=}
\begin{document}

\title{Totally tangential $\mathbb{C}$-links and electromagnetic knots}

\author{Benjamin \textsc{Bode}}
\address{Instituto de Ciencias Matemáticas (ICMAT), Consejo Superior de Investigaciones Científicas (CSIC), Campus Cantoblanco UAM, C/ Nicolás Cabrera, 13-15, 28049 Madrid, Spain}
\address{Departamento de Matemática Aplicada a la Ingeniería Industrial, ETSIDI, Universidad Politécnica de Madrid, Rda. de Valencia 3, 28012 Madrid, Spain}
\email{benjamin.bode@upm.es}


\begin{abstract}
The set of real-analytic Legendrian links with respect to the standard contact structure on the 3-sphere $S^3$ corresponds both to the set of totally tangential $\mathbb{C}$-links as defined by Rudolph and to the set of stable knotted field lines in Bateman electromagnetic fields of Hopf type. It is known that every isotopy class has a real-analytic Legendrian representative, so that every link type $L$ admits a holomorphic function $G:\mathbb{C}^2\to\mathbb{C}$ whose zeros intersect $S^3$ tangentially in $L$ and there is a Bateman electromagnetic field $\mathbf{F}$ with closed field lines in the shape of $L$. However, so far the family of torus links are the only examples where explicit expressions of $G$ and $\mathbf{F}$ have been found. In this paper, we present an algorithm that finds for every given link type $L$ a real-analytic Legendrian representative, parametrised in terms of trigonometric polynomials. We then prove that (good candidates for) examples of $G$ and $\mathbf{F}$ can be obtained by solving a system of linear equations, which is homogeneous in the case of $G$ and inhomogeneous in the case of $\mathbf{F}$. We also use the real-analytic Legendrian parametrisations to study the dynamics of knots in Bateman electromagnetic fields of Hopf type. In particular, we show that no compact subset of $\mathbb{R}^3$ can contain an electromagnetic knot indefinitely.
\end{abstract}

\maketitle

\section{Introduction}\label{sec:intro}

Consider the standard contact structure $\xi$ on $S^3\subset\mathbb{C}^2$, given by the kernel of the global contact form
\begin{equation}\label{eq:contact}
-y_1\rmd x_1+x_1\rmd y_1-y_2\rmd x_2+x_2 \rmd y_2,
\end{equation}
where we use $z_j=x_j+\rmi y_j$, $=1,2$, as coordinates on $\mathbb{C}^2$. It is known that every link in $S^3$ is ambient isotopic to a real-analytic Legendrian link with respect to $\xi$ \cite{rudolphtt}, that is, the link should be a parametrised by real-analytic functions in $\mathbb{C}^2$ and its tangent bundle should be contained in $\xi$. We will review an argument for this existence statement at a later point. However, finding such an explicit representative, say as a parametric curve, for a given link type is not straightforward. We can approximate any Legendrian link by a set of real-analytic curves, but the result might no longer be Legendrian. Likewise, we can approximate any real-analytic link by a Legendrian link (in the $C^0$-norm), but the result might no longer be real-analytic. In this paper, we present an algorithm that finds for any given link type an ambient isotopic set of parametric curves that are Legendrian and given in terms of trigonometric polynomials. In particular, they are real-analytic.

The challenge of finding explicit real-analytic Legendrian representatives is an important step in the construction of examples of totally tangential $\mathbb{C}$-links and electromagnetic knots. Both are known to exist for any link type, but in both cases the only known explicit examples are torus links. We show that, once an explicit parametrisation of the link has been found via our algorithm, good candidates for the desired functions can be obtained by solving a system of linear equations.

We will discuss some numerical difficulties with this approach, which are the reason that at the moment we do not have new explicit examples.

Even without explicit expressions for the electromagnetic fields, the explicit real-analytic Legendrian knots found via our algorithm allow us to study the dynamics of electromagnetic knots and prove general results about their time evolution.

\subsection{Electromagnetic knots}
Topological properties of electromagnetic fields have garnered much attention over the last years. Among the most striking examples is Ra{\~n}ada's Hopfion, a finite-energy solution to Maxwell's equations in vacuum (first mentioned by Synge \cite{synge}, while Ra{\~n}ada found its topological properties \cite{ranada}) that has the property that at any moment in time all field lines of the corresponding electric and magnetic field are closed loops and any pair of them forms a Hopf link. It was speculated that this topological stability could explain the longevity of the unexplained phenomenon called ball-lightning \cite{ranadaball, ranadaball2}, where a bright ball of lightning can be seen for various seconds as opposed to an average duration of a magnitude of microseconds for a lightning strike. Arguments opposing this hypothesis have been published in \cite{noball}.

Ra{\~n}ada's Hopfion raises the question which other knots and links can arise in electromagnetic fields. In \cite{kedia} Kedia, Bialynicki-Birula, Peralta-Salas and Irvine used a construction by Bateman \cite{bateman} to find explicit solutions to Maxwell's equations in vacuum whose electric and magnetic part have a closed field line in the shape of any given torus knot for all time. Ra{\~n}ada's Hopfion can be seen as a special case of this family.

An electromagnetic field consists of two time-dependent vector fields $\mathbf{E}_t$ and $\mathbf{B}_t$, which together satisfy Maxwell's equations. The two fields can be combined into the \textit{Riemann-Silberstein vector} $\mathbf{F}=\mathbf{E}_t+\rmi \mathbf{B}_t$, which maps $\mathbb{R}^{3+1}$ to $\mathbb{C}^3$ using the electric field as its real part and the magnetic field as its imaginary part. A good review on the Riemann-Silberstein vector can be found in \cite{RS}.

Bateman's construction defines an electromagnetic field 
\begin{equation}
\mathbf{F}=h(\alpha,\beta)\nabla\alpha\times\nabla\beta
\end{equation} 
for any choice of $(\alpha,\beta):\mathbb{R}^{3+1}\to\mathbb{C}^2$ with 
\begin{equation}
\label{eq:1}
\nabla \alpha\times\nabla\beta=\rmi (\partial_t\alpha\nabla\beta-\partial_t\beta\nabla\alpha),
\end{equation}
and any function $h:\mathbb{C}^2\to\mathbb{C}$ that is holomorphic on a neighbourhood of the image of $(\alpha,\beta)$. Here the symbol $\nabla$ refers to the gradient with respect to the three spatial variables $x$, $y$ and $z$. The family of Bateman fields in \cite{kedia} all use the same choice of $(\alpha,\beta)$, namely
\begin{align}\label{eq:alphabeta}
\alpha&=\frac{x^2+y^2+z^2-t^2-1+2\rmi z}{x^2+y^2+z^2-(t-\rmi)^2},\nonumber\\
\beta&=\frac{2(x-\rmi y)}{x^2+y^2+z^2-(t-\rmi)^2},
\end{align} 
whose image is the 3-sphere $S^3$ of unit radius in $\mathbb{C}^2$. The pushforward of the Poynting vector field of any Bateman field by $(\alpha,\beta)|_{t=0}$ with this choice of $(\alpha,\beta)$ is tangent to the fibers of the Hopf fibration. We say that such a Bateman field is \textit{of Hopf type}.

The time evolution of the field lines of a Bateman field of Hopf type is then given by pushforwards by a 1-parameter family of diffeomorphisms $\Phi_t:\mathbb{R}^3\to\mathbb{R}^3$, where $\Phi_0$ is the identity \cite{bode}. To be precise, if $\mathbf{E}_t$ and $\mathbf{B}_t$ denote the electric and magnetic part of a Bateman field of Hopf type at time $t$, then
\begin{align}
(\Phi_{t=t_*})_*\mathbf{E}_{t=0}=&\frac{(1+x^2+y^2+(t_*-z)^2)^2}{t_*^4-2t_*^2(x^2+y^2+z^2-1)+(x^2+y^2+z^2+1)^2}\mathbf{E}_{t=t_*}\nonumber\\
(\Phi_{t=t_*})_*\mathbf{B}_{t=0}=&\frac{(1+x^2+y^2+(t_*-z)^2)^2}{t_*^4-2t_*^2(x^2+y^2+z^2-1)+(x^2+y^2+z^2+1)^2}\mathbf{B}_{t=t_*}.
\end{align}
It follows that the topology of the field lines is preserved for all time. If $L$ is a set of closed field lines (i.e., a set of periodic orbits) of $\mathbf{E}_0$, then $\Phi_t(L)$ is a set of closed field lines of $\mathbf{E}_t$ for all $t\in\mathbb{R}$ of the same link type.

Alternatively, the time evolution of a Bateman field of Hopf type can be understood as follows. There is a pair of vector fields $\mathbf{E}$ and $\mathbf{B}$ on the unit 3-sphere, which do not depend on time, and a family of projection maps $\varphi_t:S^3\to\mathbb{R}^3\cup\{\infty\}$ such that $\mathbf{E}_t=(\varphi_t)_*(\mathbf{E})$ and $\mathbf{B}_t=(\varphi_t)_*(\mathbf{B})$ up to a known overall real factor \cite{bode}. In other words, the time-dependent electric and magnetic fields are multiples of pushforwards of time-independent vector fields on $S^3$ by time-dependent projection maps. The relation between the two perspectives is that $\Phi_t=\varphi_t\circ\varphi_0^{-1}$ and $\varphi_{t_*}=\left((\alpha,\beta)|_{t=t_*}\right)^{-1}$.

We showed in \cite{bode} that any link type can be realised as a set of closed field lines of the electric and magnetic part of a Bateman field of Hopf type. This is based on the observation that the electric and magnetic part of a Bateman field of Hopf type are Legendrian, that is to say, the corresponding fields $\mathbf{E}$ and $\mathbf{B}$ on $S^3$ are everywhere tangent to the standard contact structure on $S^3$, defined via Eq.~\eqref{eq:contact}. It follows that every periodic orbit $L$ of $\mathbf{E}$ or $\mathbf{B}$ must be tangent to the standard contact structure on $S^3$, which means that by definition $L$ is a Legendrian knot.

All functions and fields considered are real-analytic. It was shown in \cite{bode} that for a given Legendrian link $L$ on $S^3$ there is a corresponding Bateman field with a set of closed field lines given by $\varphi_t(L)$ if and only if $L$ is real-analytic. The result that every link type arises as a topologically stable set of closed field lines then follows, since it is known that every link type can be represented by a real-analytic Legendrian link. However, the proof in \cite{bode} is not constructive and currently the family of torus knots remain the only examples where the corresponding electromagnetic field is known. The choice of $h(z_1,z_2)=z_1^{p-1}z_2^{q-1}$ results in a Bateman field of Hopf type for a $(p,q)$-torus link \cite{kedia}.

\subsection{Totally tangential $\mathbb{C}$-links}

We denote by $\rho:\mathbb{C}^2\to\mathbb{R}$ the function $\rho(z_1,z_2)=|z_1|^2+|z_2|^2$ and we write $\text{Reg}(V_G)$ for the set of regular points of $V_G=\{(z_1,z_2):G(z_1,z_2)=0\}$, for a holomorphic function $G:\mathbb{C}^2\to\mathbb{C}$.

\begin{definition}\label{def:tt}
A link $L$ in $S^3$ is called a totally tangential $\mathbb{C}$-link if there exists a neighbourhood $U$ of the 4-ball $D^4\subset\mathbb{C}^2$ and a holomorphic function $G:U\to\mathbb{C}$ such that
\begin{itemize}
\item $L=V_G\cap S^3$.
\item $V_G\cap \text{Int}(D^4)=\emptyset$, where $\text{Int}(D^4)$ denotes the open 4-ball.
\item $V_G\cap S^3$ is a non-degenerate critical manifold of index 1 of $\rho|_{\text{Reg}(V_G)}$. 
\end{itemize}
\end{definition}

It follows from the definition that the intersection $V_F\cap S^3$ is tangential at every point. The family of totally tangential $\mathbb{C}$-link can thus be seen as the natural counterpart to the family of transverse $\mathbb{C}$-links, which are defined as transverse intersections $V_G\cap S^3$ \cite{rudolph83, rudolph84, boileau}. Occasionally, we simply say that $L$ is totally tangential if it is a totally tangential $\mathbb{C}$-link.

\begin{theorem}[Rudolph \cite{rudolphtt}]
A link is totally tangential if and only if it is real-analytic and Legendrian with respect to the standard contact structure on $S^3$.
\end{theorem}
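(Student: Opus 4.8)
The plan is to prove both implications separately, establishing the equivalence by analysing the relationship between the Morse theory of $\rho$ restricted to the zero set $V_G$ and the Legendrian condition on the intersection $L = V_G \cap S^3$.

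First I would prove that a totally tangential $\mathbb{C}$-link is real-analytic and Legendrian. Since $G$ is holomorphic, $V_G$ is a real-analytic set, and near a regular point of $V_G$ it is a smooth real-analytic surface; the condition $L = V_G \cap S^3$ together with non-degeneracy of $\rho|_{\mathrm{Reg}(V_G)}$ along $L$ forces $L$ to consist of regular points of $V_G$, so $L$ is a real-analytic curve (parametrise by the implicit function theorem applied to the real-analytic defining equations). For the Legendrian property, the key computation is that at a point $p \in V_G$, the complex tangent line $T_p V_G \subset \mathbb{C}^2$ meets the maximal complex subspace $\xi_p$ of $T_p S^3$ in a way governed by $\mathrm{d}(\rho|_{V_G})$. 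Concretely, write $\rho = \langle z, z\rangle$; then $\mathrm{d}\rho_p(v) = 2\,\mathrm{Re}\langle p, v\rangle$, and the contact form in Eq.~\eqref{eq:contact} is, up to sign, $\mathrm{Im}\langle z, \mathrm{d}z\rangle$. A vector $v$ tangent to $V_G$ at $p\in S^3$ lies in $\ker$ of the contact form exactly when $\mathrm{Im}\langle p, v\rangle = 0$. Since $V_G$ is a complex curve, $T_pV_G$ is a complex line, so if $v \in T_pV_G$ then $\rmi v \in T_pV_G$ as well, and $\mathrm{d}\rho_p(\rmi v) = 2\,\mathrm{Re}\langle p, \rmi v\rangle = -2\,\mathrm{Im}\langle p, v\rangle$. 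Now $p$ is a critical point of $\rho|_{\mathrm{Reg}(V_G)}$ along $L$, meaning $\mathrm{d}\rho_p$ vanishes on $T_p L$; but if $v$ spans $T_pL$ (a real line inside the complex line $T_pV_G$), then $\mathrm{d}\rho_p(v) = 0$ gives $\mathrm{Re}\langle p,v\rangle = 0$, and since $\mathrm{d}\rho_p$ is $\mathbb{C}$-linear on $T_pV_G$ up to the identification, it also vanishes on $\rmi v$, giving $\mathrm{Im}\langle p, v\rangle = 0$, i.e. $v$ lies in the contact plane. Hence $L$ is Legendrian. (One must also use the index-$1$ and non-degeneracy hypotheses to ensure $L$ is a smooth $1$-manifold and that $V_G$ stays outside $\mathrm{Int}(D^4)$, which is what makes $S^3$ a regular level set away from $L$.)

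Conversely, given a real-analytic Legendrian link $L \subset S^3$, I would construct the holomorphic $G$. The idea is that a real-analytic curve in $S^3$ admits a complexification: parametrise each component of $L$ by real-analytic maps $S^1 \to S^3 \subset \mathbb{C}^2$, extend the parametrisation to a holomorphic map from an annular neighbourhood in $\mathbb{C}^*$, and take the image, which is a (germ of a) complex curve $\Sigma$ in $\mathbb{C}^2$ containing $L$. The Legendrian condition is precisely what guarantees that $\Sigma$ is tangent to $S^3$ along $L$ rather than transverse — this is the reverse of the computation above, showing $\mathrm{d}(\rho|_\Sigma)$ vanishes along $L$. One then takes $G$ to be a local holomorphic defining function for $\Sigma$, checks that after possibly multiplying by a unit or adding higher-order corrections the function $\rho|_{\mathrm{Reg}(V_G)}$ has $L$ as a non-degenerate critical manifold of index exactly $1$ (the index is forced to be $1$ because $\Sigma$ is a complex curve: the complex structure pairs the two transverse directions so the Hessian of $\rho$ restricted to $\Sigma$ has signature $(1,1)$ in the normal directions, while it vanishes along $L$), and finally arranges $V_G \cap \mathrm{Int}(D^4) = \emptyset$ by noting that since $\rho$ has a minimum of $1$ along $L$ on $\Sigma$, the surface $\Sigma$ locally lies in $\rho \geq 1$, i.e. outside the open ball; globalising from the germ to a genuine holomorphic function on a neighbourhood of $D^4$ uses that $\Sigma$ is a closed analytic subset and an appropriate extension/approximation theorem.

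The main obstacle I expect is the converse direction's globalisation step: passing from the local complexification germs $\Sigma$ near each component of $L$ to a single holomorphic function $G$ defined on a full neighbourhood $U$ of $D^4$ whose zero set is exactly $\Sigma \cap U$ (not larger, not with spurious components inside $D^4$), while simultaneously preserving the three defining conditions. The local-to-global passage for analytic sets in $\mathbb{C}^2$ is delicate, and controlling the behaviour of $G$ near and inside the ball — ensuring no extra zeros creep into $\mathrm{Int}(D^4)$ — is where the real work lies; the Morse-theoretic bookkeeping (non-degeneracy, index $1$) is comparatively routine once the complex-curve structure is in hand.
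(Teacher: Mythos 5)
The paper itself does not prove this statement---it is imported from Rudolph \cite{rudolphtt}, and the only ingredient the paper supplies is the tangency computation in Lemma~\ref{lemma}. Your forward direction is essentially that computation and is correct in substance: at a point $p$ of the critical manifold, $\rmd\rho_p$ vanishes on the \emph{whole} complex line $T_pV_G$ (because every point of $L$ is a critical point of $\rho|_{\mathrm{Reg}(V_G)}$, not because $\rmd\rho_p$ is ``$\mathbb{C}$-linear''---it is only $\mathbb{R}$-linear), which gives $\langle p,v\rangle=0$ and hence $T_pL\subset T_pV_G=\xi_p$. Your converse strategy (complexify the real-analytic parametrisation to an annular holomorphic curve $\Sigma$ and use $\overline{z_1}z_1'+\overline{z_2}z_2'=0$ to make $L$ a critical manifold of $\rho|_\Sigma$) is indeed the right one and is Rudolph's.

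There are, however, two genuine problems in the converse. First, the index argument is dimensionally wrong: $L$ is a real curve inside the real surface $\Sigma$, so its normal bundle in $\Sigma$ has rank one and the transverse Hessian is a $1\times 1$ matrix; it cannot have ``signature $(1,1)$''. The correct computation writes $\hat\gamma(s+\rmi u)$ for the complexification, differentiates $\langle\gamma,\gamma'\rangle=0$ to get $\langle\gamma,\gamma''\rangle=-|\gamma'|^2$, and finds $\partial_u^2(\rho\circ\hat\gamma)|_{u=0}=4|\gamma'|^2>0$: a single positive transverse direction, which is what forces $\Sigma$ locally into $\{\rho\geq 1\}$ and hence $V_G\cap\mathrm{Int}(D^4)=\emptyset$ near $L$. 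Your text simultaneously asserts index $1$ and a minimum along $L$, which cannot both hold in the standard Bott convention, so this step needs to be redone rather than declared routine. Second, the globalisation you flag at the end is not a technicality but the actual content of the converse: $\Sigma$ exists only in a thin neighbourhood of $S^3$, and to obtain one holomorphic $G$ on a neighbourhood $U$ of all of $D^4$ with $V_G\cap U=\Sigma\cap U$ one must shrink the annuli so that $\rho>1$ on $\Sigma\setminus L$, observe that $\Sigma\cap\{\rho<1+\epsilon\}$ is then a closed analytic curve in the Stein domain $\{\rho<1+\epsilon\}$, and solve the second Cousin problem there (possible because the domain is topologically a ball) to realise $\Sigma$ as a zero divisor. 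Without that step the proposal never produces the function $G$ demanded by Definition~\ref{def:tt}, so as written the converse is incomplete.
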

\begin{corollary}[Rudolph \cite{rudolphtt}]
Every link type has a representative that is totally tangential.
\end{corollary}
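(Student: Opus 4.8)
The plan is to deduce the corollary from the preceding theorem: since a link is totally tangential exactly when it has a representative that is real-analytic and Legendrian for $\xi$, it suffices to exhibit, for every link type, such a representative. The standard contact structure on $S^3$ restricted to the complement of a point is contactomorphic, via a real-analytic contactomorphism, to $(\mathbb{R}^3,\ker(\rmd z-y\,\rmd x))$, and since links are compact we may work there. So it is enough to produce, for each link type, a closed real-analytic Legendrian curve (or multicurve) in $\mathbb{R}^3$ for this contact form; pushing it forward by the contactomorphism yields a real-analytic Legendrian, hence totally tangential, representative in $S^3$.

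First I would recall the classical fact that every link type has a smooth Legendrian representative, via front projections: a generic immersed plane curve --- one with only transverse double points and ordinary semicubical cusps, and no vertical tangents away from the cusps --- lifts to an embedded Legendrian curve in $(\mathbb{R}^3,\ker(\rmd z-y\,\rmd x))$ by setting $y=\rmd z/\rmd x$, and every link type arises this way from a suitable such front. The crucial observation is that in the front projection the Legendrian condition imposes nothing on the plane curve: one only needs a generic front. Hence the strategy is to take a generic front $F_0$ presenting the given link type and replace it by a real-analytic front $F$ that is $C^0$-close to $F_0$, still generic, and with the same arrangement of cusps and crossings, so that the Legendrian lift of $F$ is ambient isotopic to that of $F_0$, while now being real-analytic.

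To see that the lift of a real-analytic front is real-analytic, parametrise $F$ by real-analytic functions $t\mapsto(x(t),z(t))$, periodic on each component. Away from the cusps $x'(t)\neq 0$, so $y(t)=z'(t)/x'(t)$ is real-analytic. At a cusp $t_0$ one has $x'(t_0)=0$ and hence $z'(t_0)=0$; if the cusp is an honest semicubical cusp, then after a local real-analytic reparametrisation $x(t)-x(t_0)\sim c\,(t-t_0)^2$ and $z(t)-z(t_0)\sim d\,(t-t_0)^3$ with $c,d\neq 0$, so $z'(t)/x'(t)$ has a removable singularity and $y$ extends real-analytically across $t_0$ (vanishing to first order). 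Thus $\gamma(t)=(x(t),y(t),z(t))$ is a real-analytic Legendrian parametrisation of a link ambient isotopic to the given one, and by the theorem its image is a totally tangential representative.

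The main obstacle is the middle step: producing a front that is simultaneously real-analytic, generic, combinatorially equivalent to $F_0$, and with every cusp of exact semicubical type. I would handle this by assembling $F_0$ from finitely many arcs together with a fixed real-analytic local model $(c\,s^2,\,d\,s^3)$ near each cusp, and then approximating the remaining arcs by real-analytic functions (for instance truncated Fourier series), matching values and first derivatives at the gluing points; genericity of the front and the ambient isotopy type of the Legendrian lift are open conditions in the $C^2$-topology on the front away from the fixed cusp models, so a sufficiently good approximation preserves them. This is precisely the point at which the paper's algorithm later provides a fully explicit, trigonometric-polynomial version of the construction; for the corollary it suffices to know that such an approximating front exists.
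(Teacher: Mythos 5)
Your route is genuinely different from the one the paper reviews for this corollary. The paper's argument (attributed to Eliashberg) starts from a smooth Legendrian representative in $S^3$, takes a close real-analytic approximation $L'$, observes that $L'$ is Legendrian for some contact structure isotopic to the standard one, and then invokes a \emph{real-analytic} contactomorphism carrying that structure back to the standard one; the entire difficulty is concentrated in the existence of that real-analytic Gray-stability map. You instead try to build the real-analytic Legendrian directly from a real-analytic front, which is much closer in spirit to the paper's Section~\ref{sec:variations}. Your reduction to $\mathbb{R}^3$, the lifting formula $y=z'/x'$, and the verification that an exact semicubical cusp gives a removable singularity (indeed $z'/x'\sim (3d/2c)(t-t_0)$, so the lift is immersed there) are all correct.

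The gap is in the step you yourself flag as the main obstacle. The front you actually construct --- fixed real-analytic cusp models glued to real-analytic approximations of the remaining arcs, matching values and first derivatives at the gluing points --- is only \emph{piecewise} real-analytic (it is $C^1$, not $C^\omega$, at the gluing points), so its lift is not a real-analytic Legendrian link and the Theorem does not apply. Moreover, this cannot be repaired by a routine ``open condition'' argument: the property that makes the front liftable is that $z'$ vanishes wherever $x'$ does, to the correct orders, and this simultaneous-vanishing condition is closed and unstable --- a generic global real-analytic ($C^2$-close) approximation of your glued front will separate the zeros of $x'$ and $z'$, making $y=z'/x'$ unbounded and the lift non-compact. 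So the appeal to genericity and isotopy being open in the $C^2$-topology covers the crossings and the absence of extra vertical tangencies, but not the cusps, which are exactly where the work is. The gap is fixable --- one can take a single global trigonometric-polynomial parametrisation and add a finite-dimensional correction prescribing the $2$-jets of $x$ and $3$-jets of $z$ at the finitely many cusp parameters, then re-check embeddedness --- but as written the real-analytic front has not been produced. This is precisely the instability that the paper's Algorithm~1 sidesteps by using the Lagrangian-type projection (defining $Z$ by integration, so that \emph{every} planar curve lifts to a Legendrian and only periodicity of $Z$ must be arranged), and that its front-projection variant in Section~\ref{sec:variations} identifies as the source of numerical trouble.
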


Therefore, a link $L$ in $\mathbb{R}^3$ is a set of closed field lines of an electric or magnetic part of a Bateman field of Hopf type at time $t$ if and only if $\varphi_t^{-1}(L)$ is totally tangential.

\begin{example}[Rudolph \cite{rudolphtt}]
Consider the complex polynomial $G:\mathbb{C}^2\to\mathbb{C}$, $G(z_1,z_2)=z_1^pz_2^q-1$ for $p,q\in\mathbb{N}$ with $\gcd(p,q)=1$. Then the zeros of $G$ intersect a 3-sphere of appropriate radius totally tangentially in the $(p,-q)$-torus knot.
\end{example}

The situation regarding totally tangential $\mathbb{C}$-links is thus very similar to the state of the art in knotted electromagnetic fields. It is known that every link type has a representative that is a totally tangential $\mathbb{C}$-link, but the only examples for which the corresponding holomorphic function is actually known are torus links.

\subsection{Main results}

In this article we present an algorithmic procedure that finds a real-analytic Legendrian representative for any given link type.

\begin{theorem}\label{thm:algo}
Algorithm 1 constructs for any given link $L$ a real-analytic Legendrian link, presented as a set of parametric curves in $S^3\subset\mathbb{R}^4$, that is ambient isotopic to $L$. In this parametrisation each coordinate is a quotient of two trigonometric polynomials.
\end{theorem}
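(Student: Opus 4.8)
The plan is to construct the Legendrian representative in stages, starting from a classical combinatorial presentation of the link and upgrading it step by step to a trigonometric-polynomial Legendrian parametrisation. First I would take a generic front projection (or a grid/rectangular diagram) of $L$ in the standard contact $\mathbb{R}^3$: every link type admits a Legendrian front diagram, and the Legendrian condition there is purely combinatorial (the front has no vertical tangencies, crossings are resolved by the front-plane slope, and cusps replace the forbidden tangencies). From such a front one recovers a genuine Legendrian curve in $\mathbb{R}^3$ with contact form $\rmd z - y\,\rmd x$ by setting $y = \rmd z/\rmd x$ along the front. The key point is that this lift is determined by the front, so if I can produce the front as a piecewise-smooth closed curve with controlled (polynomial or trigonometric-polynomial) pieces, the Legendrian lift inherits that regularity in the $x$ and $z$ coordinates, and $y$ is obtained by an honest differentiation.

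The second stage is to smooth the front (in particular to round the cusps and the corners between pieces) while staying inside the class of trigonometric polynomials and without destroying the Legendrian condition. Here I would parametrise $x(\theta)$ and $z(\theta)$ by trigonometric polynomials in $\theta\in[0,2\pi)$ chosen so that $x'(\theta)$ vanishes exactly at the cusp parameters (to prescribed even order) and so that $z'(\theta)$ vanishes there to matching order; then $y(\theta)=z'(\theta)/x'(\theta)$ extends to a real-analytic, in fact trigonometric-polynomial-quotient, function of $\theta$. A convenient way to guarantee $C^0$-closeness to the original front — hence ambient isotopy — is to start from the piecewise-linear front, replace it by its truncated Fourier series, and invoke a standard approximation/isotopy argument: a sufficiently fine trigonometric approximation of an embedded closed curve is ambient isotopic to it, and one checks the Legendrian condition is preserved because it only involves the first jet, which also converges. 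This yields a real-analytic Legendrian link in $\mathbb{R}^3$ with each coordinate a quotient of trigonometric polynomials.

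The final stage is to transport this into $S^3\subset\mathbb{R}^4$ with the contact form \eqref{eq:contact}. The standard contactomorphism between $(\mathbb{R}^3,\rmd z-y\,\rmd x)$ and $(S^3\setminus\{pt\},\xi)$ — essentially the inverse of stereographic projection composed with the projectivisation/Hopf identification — is given by explicit rational formulas in $x,y,z$. Composing our trigonometric-polynomial-quotient parametrisation of $\mathbb{R}^3$ with these rational formulas produces a parametrisation of a Legendrian link in $S^3$ in which every coordinate is again a quotient of trigonometric polynomials in $\theta$ (a rational function of trigonometric polynomials is such a quotient after clearing denominators), and the Legendrian condition is preserved because the map is a contactomorphism. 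One has to arrange at the outset that the front misses the puncture point, which is automatic after a small translation. Assembling these three stages is exactly what Algorithm 1 does, and the theorem follows.

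The step I expect to be the main obstacle is the smoothing/approximation stage: one must simultaneously (i) keep the curve embedded and $C^0$-close enough to guarantee ambient isotopy, (ii) preserve the Legendrian condition, which forces the approximations of $x$, $z$ and of the derivative $z'/x'$ to be controlled together rather than independently, and (iii) keep everything inside the trigonometric-polynomial (quotient) class, in particular controlling the orders of vanishing of $x'$ and $z'$ at the cusps so that $y$ stays real-analytic and does not blow up. Making these three requirements compatible in an explicitly implementable way — so that it genuinely constitutes an algorithm and not merely an existence proof — is the technical heart of the argument; the front-lift and the contactomorphism-transport stages are, by comparison, bookkeeping with explicit formulas.
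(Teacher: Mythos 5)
Your proposal follows the front-projection route: produce a front in $(\mathbb{R}^3,\rmd z-y\,\rmd x)$, recover the Legendrian lift via $y=z'/x'$, smooth within the trigonometric-polynomial class, and transport to $S^3$. This is not the route of Algorithm 1; it is essentially the ``variation'' the paper discusses separately in Section~\ref{sec:variations}, and the paper itself points out there why it is hard to make fully algorithmic. Algorithm 1 instead works in $(\mathbb{R}^3,\rmd Z+X\rmd Y-Y\rmd X)$ (the pushforward of the standard contact structure on a hemisphere of $S^3$ under radial projection), starts from an arbitrary parametrised knot diagram $(X(t),Y(t))$, and \emph{defines} $Z(t)=\int_0^t YX'-XY'$, so the Legendrian condition holds identically and no division ever occurs. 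Wrong crossing signs and failure of $Z$ to close up are repaired by inserting small circles tangent to the diagram, each traversal of which shifts $Z$ by $\mp 2\pi r^2$ (Lemma~\ref{lem:circle}); after Fourier truncation of the planar curve only, periodicity of $Z_{trig}$ is restored exactly by adjusting a single Fourier coefficient (the ``balanced parametrisation'' of Lemma~\ref{lem:balance}). This design is precisely what makes the construction robust where your route is fragile.

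The genuine gap in your argument is the smoothing stage, and it is not merely a technical loose end. You assert that truncating the Fourier series of the front preserves the Legendrian condition ``because it only involves the first jet, which also converges.'' That is not how the front picture works: the Legendrian condition is not an open condition that survives $C^1$-approximation; rather, one must \emph{redefine} $y$ as $z'/x'$ of the approximated front, and then the entire difficulty is concentrated at the cusps, where $x'$ vanishes. For $y$ to remain bounded and real-analytic you need $z'$ to be divisible by $x'$ in the ring of trigonometric polynomials (exact algebraic cancellation at every zero of $x'$, to matching order). A truncated Fourier series of a piecewise curve with prescribed cusps will generically have $x'$ vanishing at points where $z'$ does not, producing a vertical tangency and a pole of $y$; conversely, arranging the simultaneous vanishing exactly requires an interpolation scheme that you name as ``the technical heart'' but do not supply. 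Separately, your claim that the transported coordinates are quotients of trigonometric polynomials ``after clearing denominators'' glosses over the fact that the map to $S^3$ involves normalising by $R=\sqrt{1+X^2+Y^2+Z^2}$, not a rational map. As written, the proposal is an outline of a plausible alternative construction with its central step unproved, rather than a proof of the theorem.
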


In fact, the algorithm constructs trigonometric parametrisations of Legendrian links in $\mathbb{R}^3$ with respect to an appropriate contact structure. The fact that the parametrisation of the corresponding link in $S^3$ is given by quotients of trigonometric polynomials follows from properties of the projection maps between $S^3$ and $\mathbb{R}^3$.

We see this algorithm as a first step in the generation of explicit examples of electromagnetic knots and totally tangential $\mathbb{C}$-links, or rather, the functions $h$ and $G$ that define them, respectively.

\begin{theorem}\label{thm:2}
Let $L\subset S^3$ be an output of Algorithm 1. 
\begin{enumerate}[label={\roman*)}]
\item Then for every $n$ there is a matrix $A$ (whose size depends on $n$ and the parametrisation of $L$) such that any non-trivial solution $x$ of $Ax=0$ corresponds to the coefficients of a polynomial $G:\mathbb{C}^2\to\mathbb{C}$ of degree at most $2n$ and with $G|_L=0$. For sufficiently large $n$ a non-trivial solution always exists.

\item Furthermore, for every $n$ there is a vector $y$ (whose size depends on $n$ and on the parametrisation of $L$) such that every solution $x$ of $Ax=y$ corresponds to the coefficients of a polynomial $h:\mathbb{C}^2\to\mathbb{C}$ of degree at most $2n$ and with the property that $\mathbf{F}=h(\alpha,\beta)\nabla\alpha\times\nabla\beta$ is a Bateman field of Hopf type with $L$ as a set of closed field lines of its magnetic (or electric) part.
\end{enumerate}
\end{theorem}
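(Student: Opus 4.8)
The plan is to turn both statements into linear-algebra problems by expanding the relevant holomorphic functions in the monomial basis and imposing the required vanishing/field-line conditions along the explicit trigonometric parametrisation of $L$. Write $L=\bigcup_{k} L_k$ where each component $L_k$ is parametrised by a map $t\mapsto (z_1^{(k)}(t),z_2^{(k)}(t))\in S^3$, and recall from Theorem~\ref{thm:algo} that each $z_j^{(k)}(t)$ is a quotient of trigonometric polynomials in $t$. A general polynomial $G:\mathbb{C}^2\to\mathbb{C}$ of degree at most $2n$ is $G(z_1,z_2)=\sum_{a+b\le 2n} c_{ab} z_1^a z_2^b$ with unknown coefficients $c_{ab}\in\mathbb{C}$.

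First I would prove (i). Substituting the parametrisation into $G$ and clearing the common denominator, the condition $G|_{L_k}=0$ becomes the identity $\sum_{a+b\le 2n} c_{ab}\, p_{ab}^{(k)}(t)\equiv 0$, where each $p_{ab}^{(k)}$ is a fixed trigonometric polynomial (numerator of $z_1^a z_2^b$ times the appropriate power of the denominator). A trigonometric polynomial vanishes identically iff all its Fourier coefficients vanish; collecting these finitely many scalar conditions over all components $k$ gives a homogeneous linear system $Ax=0$ in the coefficient vector $x=(c_{ab})$, where $A$ is the matrix whose columns record the Fourier coefficients of the $p_{ab}^{(k)}$. Any nontrivial solution yields a polynomial $G$, not identically zero on $\mathbb{C}^2$ (the $z_1^a z_2^b$ are linearly independent as functions on $\mathbb{C}^2$), with $G|_L=0$. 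For the existence of a nontrivial solution when $n$ is large, I would use a dimension count: the number of unknowns grows like $\binom{2n+2}{2}\sim 2n^2$, while the number of independent Fourier constraints is bounded by the total degree of the trigonometric polynomials $p_{ab}^{(k)}$, which grows only linearly in $n$ (the denominators are fixed and the numerators have degree $O(n)$), so for $n$ large the system is underdetermined and $\ker A\neq\{0\}$. Here a Vandermonde-type nonvanishing argument (distinct frequencies) guarantees the constraint count is genuinely $O(n)$ and not artificially inflated.

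Next I would prove (ii). Recall $\varphi_{t}^{-1}=(\alpha,\beta)|_{t}$ and that, by the discussion preceding Definition~\ref{def:tt} and the results of \cite{bode, kedia}, a Bateman field of Hopf type $\mathbf{F}=h(\alpha,\beta)\nabla\alpha\times\nabla\beta$ has its magnetic (resp.\ electric) field lines given by $\varphi_t$-pushforwards of the integral curves on $S^3$ of a fixed Legendrian vector field whose orbit structure is controlled by $h|_{S^3}$; concretely, $L\subset S^3$ is a union of closed field lines precisely when $h$ restricted to the image of the parametrisation of $L$ satisfies a prescribed first-order (tangency/closed-orbit) condition — for the magnetic part this amounts to an equation of the form $\operatorname{Im}\bigl(h(z_1(t),z_2(t))\,\omega(t)\bigr)=0$ for a known nowhere-zero trigonometric function $\omega(t)$ built from $\nabla\alpha\times\nabla\beta$ evaluated along $L$, while a nondegeneracy/nonvanishing requirement $h|_L\not\equiv 0$ is open and automatically satisfied by generic solutions. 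Writing $h(z_1,z_2)=\sum_{a+b\le 2n} d_{ab} z_1^a z_2^b$ and substituting the parametrisation, this becomes an \emph{inhomogeneous} linear system: the same monomial-evaluation structure produces the matrix $A$ (possibly enlarged), and the normalisation fixing the orbit to be exactly $L$ rather than a rescaling supplies a nonzero right-hand side $y$ (e.g.\ a normalisation $h(z_1(t_0),z_2(t_0))=1$ at one point, together with the linear orbit conditions). Any solution $x$ of $Ax=y$ then gives the coefficients of the desired $h$. I would note that, unlike part (i), one must argue that $Ax=y$ is \emph{consistent}; this follows because the abstract existence result of \cite{bode} already produces \emph{some} real-analytic $h$ realising $L$, and a degree-$2n$ truncation/approximation argument (or the fact that the relevant $h$ in \cite{bode} can be taken polynomial after composing with the rational parametrisation) shows a polynomial solution exists for $n$ large.

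The main obstacle is the precise derivation in part (ii) of the linear "closed field line'' condition on $h$ along $L$ and the verification that it is genuinely affine-linear in the coefficients of $h$ with the stated right-hand side; this requires unpacking the relationship between $\mathbf{F}$, the time-independent vector fields $\mathbf{E},\mathbf{B}$ on $S^3$, and the parametrisation, using Eq.~\eqref{eq:1} and the Hopf-type hypothesis, and then checking that clearing denominators keeps everything in the polynomial (trigonometric) category so that the Fourier-coefficient reduction applies. The homogeneous case (i) is comparatively routine once the parametrisation from Theorem~\ref{thm:algo} is in hand.
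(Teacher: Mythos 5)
There are genuine gaps in both parts. In part (i), your claim that clearing denominators turns each monomial $z_1^az_2^b$ into a trigonometric polynomial $p_{ab}(t)$ is false for $a+b$ odd: the parametrisation from Algorithm 1 is $(z_1,z_2)=\bigl((1+\rmi X_{trig})/R,(Y_{trig}+\rmi Z_{trig})/R\bigr)$ with $R=\sqrt{1+X_{trig}^2+Y_{trig}^2+Z_{trig}^2}$, so multiplying $z_1^az_2^b$ by $R^{2n}$ leaves a factor $R^{2n-a-b}$, which is a trigonometric polynomial only when $a+b$ is even; for $a+b$ odd it is $R$ times a trigonometric polynomial and has an infinite Fourier expansion, so ``finitely many scalar conditions'' and the ensuing dimension count do not go through as stated. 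The paper's fix is to set $c_{i,j}=0$ for all $i+j$ odd from the outset, which still leaves $\lceil (n+1)^2/2\rceil$ unknowns against $4mn+1$ equations, so the quadratic-versus-linear count survives; your count of $\sim 2n^2$ free coefficients is based on the unrestricted monomial basis and needs this correction.

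In part (ii) the missing ingredient is the key lemma from \cite{bode}: writing the tangent vector $X_{(z_1,z_2)}$ of $L$ in the contact-plane basis $v_1,v_2$ and setting $H=X\cdot v_2+\rmi X\cdot v_1$, \emph{any} holomorphic $h$ with $h|_L=H$ (an exact complex-valued prescription, not merely a tangency condition) yields a Bateman field of Hopf type with $L$ as closed field lines of its magnetic part. Imposing $h|_L=H$ and multiplying by $R^{2n}$ gives literally the same evaluation matrix $A$ as in part (i) with right-hand side $y$ the Fourier coefficients of $R^{2n}H$ (a trigonometric polynomial since $H$ is a trigonometric polynomial over $R^4$). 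Your substitute condition $\operatorname{Im}(h\,\omega)=0$ plus a one-point normalisation is a different, real-linear system and would not reproduce the matrix $A$ asserted in the statement; you yourself flag this derivation as the unresolved obstacle. Finally, your consistency argument for $Ax=y$ via truncating the real-analytic $h$ of \cite{bode} is not valid: approximating $H$ does not preserve the exact field-line condition, and the paper explicitly records solvability of the inhomogeneous system (equivalently, polynomial realisability of $h$) as an open problem --- fortunately the theorem only asserts that solutions, if they exist, give the desired $h$, so no consistency proof is required.
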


\begin{remark}
\ 
\begin{itemize}
\item The polynomials $G$ in Theorem~\ref{thm:2} do not necessarily satisfy all the properties from Definition~\ref{def:tt}. They could have other zeros on $S^3$ besides $L$, so that we have in general $L\subseteq V_G\cap S^3$ instead of $L=V_G\cap S^3$. Likewise, we cannot rule out other zeros of $G$ in the inside of the unit 4-ball. It is not automatically true that the points on $L$ are regular points of $G$. But if that is the case, then the intersection is tangential along $L$. Therefore, if all points on $L$ are regular and the second property is satisfied (no zeros in $\text{Int}(D^4)$), then the third property in Definition~\ref{def:tt} holds. In particular, the intersection between $V_G$ and $S^3$ is tangential everywhere on $L$. The hope is that Algorithm 1 and Theorem~\ref{thm:2} can be used to obtain examples, for which we can check the properties in Definition~\ref{def:tt} individually.
\item It is known that every link type can be realised as a totally tangential $\mathbb{C}$-link by an entire function $G$ \cite{rudolphtt2}, so that $U$ in Definition~\ref{def:tt} is all of $\mathbb{C}^2$. It is not known if every link type can be obtained from a polynomial $G$.
\item While the equation $Ax=0$ in Theorem~\ref{thm:2} has non-trivial solutions if $n$ is sufficiently large, it is not known if the corresponding inhomogeneous equation $Ax=y$ has solutions for any value of $n$. In particular, it is not known if every link type arises as a closed field line in a Bateman field of Hopf type, if the defining function $h$ is required to be a polynomial.
\end{itemize}
\end{remark}

Theorem~\ref{thm:2} outlines a strategy to find good candidates $G$ for examples of functions that define totally tangential $\mathbb{C}$-links and examples of knotted electromagnetic fields. There are however several difficulties in practice. One is that the matrix $A$ in Theorem~\ref{thm:2} is very large, even for comparatively simple knots. Finding solutions should be possible numerically, but requires sufficient computing power (or a lot of patience). Also note that the property of a tangential intersection is a very subtle condition, which is not stable under small perturbations of the functions. A solution to the matrix equation should be thought of as an approximation to the actual desired function $G$, within numerical accuracy of the used computing device. There are some other numerical intricacies that we will explain at a later point. It is for these reasons that despite this machinery the search for examples of non-torus links remains a challenging open problem.

Even though ultimately Algorithm 1 has not led to new examples of knotted Bateman fields yet, we show that we can use the resulting curves to study the time evolution of knots in Bateman fields of knot type even without knowing an expression for the surrounding electromagnetic field. In particular, we show the following.

\begin{theorem}\label{thm:dynamics}
Let $\mathbf{F}$ be a Bateman field of Hopf type and let $L$ be a set of closed field lines of its magnetic (or electric) part at time $t=0$. Recall that there is a smooth family of diffeomorphisms $\Phi_t$ of $\mathbb{R}^3$ such that $\Phi_t(L)$ is a set of closed field lines at time $t$. Then for every compact set $K$ in $\mathbb{R}^3$ the set of values of $t$ for which $\Phi_t(L)\cap K\neq\emptyset$ is bounded. 
\end{theorem}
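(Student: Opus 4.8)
The plan is to track what happens to the link $L$ under the projection maps rather than the diffeomorphisms $\Phi_t$ directly. Recall from the excerpt that $\Phi_t = \varphi_t \circ \varphi_0^{-1}$, where $\varphi_t = \left((\alpha,\beta)|_{t=t_*}\right)^{-1} : S^3 \to \mathbb{R}^3 \cup \{\infty\}$ is the inverse of the explicit rational map from \eqref{eq:alphabeta}. Since $L$ is a compact real-analytic Legendrian link in $\mathbb{R}^3$ that is a set of closed field lines at $t=0$, its preimage $\Lambda \defeq \varphi_0^{-1}(L)$ is a fixed compact subset of $S^3$ (in fact a totally tangential $\mathbb{C}$-link, by the correspondence in the excerpt), and $\Phi_t(L) = \varphi_t(\Lambda)$. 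So it suffices to show: for every compact $K \subset \mathbb{R}^3$, the set of $t$ with $\varphi_t(\Lambda) \cap K \neq \emptyset$ is bounded.

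First I would make $\varphi_t$ explicit. Inverting \eqref{eq:alphabeta} (a standard computation — this is essentially the inverse stereographic-type projection composed with a boost), one writes $\varphi_t$, or rather its inverse $(\alpha,\beta)|_{t=t_*}$, as a map whose denominator is $x^2+y^2+z^2-(t-\rmi)^2$; the key point is that as $|t| \to \infty$, the map $(\alpha,\beta)|_{t}$ sends larger and larger spatial regions toward a single point of $S^3$, and conversely $\varphi_t$ spreads $S^3$ out so that any fixed bounded region $K$ of $\mathbb{R}^3$ corresponds to a region in $S^3$ shrinking to a point $p_\infty \in S^3$ as $|t|\to\infty$. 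Concretely, I would show that for a point $(x,y,z)\in K$, the quantity $|\beta|$ and $|\alpha - 1|$ (or suitable coordinates on $S^3$) evaluated at $\varphi_t^{-1}(x,y,z) = (\alpha,\beta)|_t(x,y,z)$ tend to $0$ uniformly on $K$ as $|t|\to\infty$, because every entry is $O(1/t^2)$ against a denominator of size $\sim t^2$, except for the constant term of $\alpha$. In other words, $\bigcup_{|t|\ge T}\varphi_t^{-1}(K)$ shrinks to $\{p_\infty\}$ as $T\to\infty$.

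Now I would combine this with the fact that $\Lambda$ is a fixed compact set \emph{and has more than one point's worth of structure}: since $L$ is a genuine link (1-dimensional), $\Lambda$ is not a single point, so there is $\varepsilon>0$ such that $\Lambda$ is not contained in any $\varepsilon$-ball in $S^3$. Pick $T$ large enough that $\varphi_t^{-1}(K)$ is contained in an $\varepsilon$-ball around $p_\infty$ for all $|t|\ge T$; then $\Lambda \not\subseteq \varphi_t^{-1}(K)$, and more to the point we need $\Lambda \cap \varphi_t^{-1}(K) = \emptyset$. For that, I would also arrange $p_\infty \notin \Lambda$: either $p_\infty$ (the point that $K$ collapses to) is already outside the fixed set $\Lambda$, in which case for large $|t|$ the shrinking neighbourhood misses $\Lambda$ and we are done; or, in the exceptional case $p_\infty \in \Lambda$, I would note $p_\infty$ is the image under $\varphi_t^{-1}$ of the point at infinity / the "center" of $K$, and handle it by instead using that $K$ is bounded so $\varphi_t^{-1}(K)$ lies on one side — more robustly, replace $K$ by a large ball $K'\supset K$ not containing that distinguished spatial point whose image is $p_\infty$, which is harmless since boundedness of the $t$-set for $K'$ implies it for $K$; then $p_\infty\notin\varphi_t^{-1}(\overline{K'})$ uniformly... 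The cleanest route: show directly that $\mathrm{dist}\big(\varphi_t^{-1}(K),\,\Lambda\big)$ is eventually positive by the shrinking estimate, after checking the collapse point $p_\infty$ is disjoint from $\Lambda$, which holds because $\Lambda = \varphi_0^{-1}(L)$ with $L$ bounded (so $\infty\notin L$, i.e.\ $\varphi_0^{-1}(\infty)\notin\Lambda$) and $p_\infty$ turns out to equal $\varphi_0^{-1}(\infty)$ or an adjacent point computable from \eqref{eq:alphabeta}. Once $\varphi_t^{-1}(K)\cap\Lambda=\emptyset$ for $|t|\ge T$, we get $K\cap\varphi_t(\Lambda)=\emptyset$, i.e.\ $K\cap\Phi_t(L)=\emptyset$, so the $t$-set is bounded.

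The main obstacle I anticipate is purely bookkeeping: identifying the collapse point $p_\infty\in S^3$ precisely from the formulas \eqref{eq:alphabeta} and verifying the uniform shrinking estimate on compact sets, while correctly dealing with the coordinate singularity of $\varphi_t$ at $p_\infty$ (the point going to $\infty\in\mathbb{R}^3\cup\{\infty\}$). The topological input — that $\Lambda$ is a fixed compact link not reduced to a point and not containing $\varphi_0^{-1}(\infty)$ — is easy; the analytic estimate on the rational maps is the part requiring care, though it reduces to elementary degree counting in $t$ in numerator versus denominator.
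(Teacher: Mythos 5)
Your proposal is correct and follows essentially the same route as the paper: identify $\Phi_t(L)=\varphi_t(\Lambda)$ with $\Lambda=\varphi_0^{-1}(L)$ fixed and compact, show $\varphi_t^{-1}(K)=(\alpha,\beta)|_t(K)$ collapses (uniformly on compacts, by the degree count in $t$) to the single point $(1,0)\in S^3$ as $|t|\to\infty$, and note that this point is $\varphi_0^{-1}(\infty)$, hence not on $\Lambda$ since $L$ is bounded. The "exceptional case" you hedge about does not arise, and your final resolution of it is exactly the paper's argument.
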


In other words, as the electromagnetic field evolves with time, the knot moves, growing indefinitely, so that no compact set can contain parts of it indefinitely.

The remainder of this paper is structured as follows. In Section~\ref{sec:algo} we outline Algorithm 1, which constructs real-analytic Legendrian links, and prove Theorem~\ref{thm:algo}. Section~\ref{sec:ex} details the individual steps for the figure-eight knot. We then discuss some variations of the algorithm in Section~\ref{sec:variations}. We prove Theorem~\ref{thm:2} in Section~\ref{sec:linear} and discuss the dynamics of electromagnetic knots as well as a proof of Theorem~\ref{thm:dynamics} in Section~\ref{sec:dynamics}.

\section{Finding real-analytic Legendrian representatives}\label{sec:algo}

Before we describe Algorithm 1, which constructs real-analytic Legendrian representatives of any give link type in $S^3$, we briefly review a typical argument why such a representative always exists. It can be found in \cite{rudolphtt}, where it is attributed to Eliashberg. Suppose that $L$ is a Legendrian link in $S^3$ with the standard contact structure. Then there is an arbitrarily close real-analytic approximation $L'$ of $L$. As pointed out before, this new link $L'$ is not necessarily a Legendrian link with respect to the standard contact structure. But, since $L'$ is ambient isotopic to $L$, it is Legendrian with respect to some other contact structure on $S^3$ (which is contactomorphic to the standard one, but different on the nose). Actually, every ambient isotopy of $S^3$ that carries $L$ to $L'$ results in a contact structure $\xi$ of $S^3$ that is equivalent to the standard one and with $L'$ Legendrian with respect to $\xi$. By choosing the approximation sufficiently close, we can guarantee that among this family of contact structures there exists one that is equivalent to the standard one via a real-analytic diffeomorphism $\varphi$. Thus the image $\varphi(L')$ is a real-analytic Legendrian link with respect to the standard contact structure on $S^3$ and since $\varphi$ is an orientation-preserving diffeomorphism, it has the same link type as $L$ and $L'$.

This argument is not particularly helpful when we try to write down an actual analytic expression for the desired space curve. Finding a real-analytic approximation to a given link is easily done, but finding the real-analytic diffeomorphism that maps the deformed contact structure back to the standard contact structure is highly non-trivial. 

The goal of this section is to find real-analytic Legendrian representatives with respect to the standard contact structure on the 3-sphere. However, there exist various real-analytic projection maps from (subsets of) $S^3$ to $\mathbb{R}^3$, which allow us to first solve the problem on $\mathbb{R}^3$ with the contact structure given by the pushforward of the standard contact structure on $S^3$ by these projection maps, and then map the corresponding real-analytic Legendrian links back to $S^3$. In fact, we will construct Legendrian links in $\mathbb{R}^3$ that are parametrised by trigonometric polynomials (i.e., real finite Fourier series), which will be useful in our attempts to construct the holomorphic functions defining totally tangential $\mathbb{C}$-links and Bateman eletromagnetic fields of Hopf type.

Let $H_+=\{(z_1,z_2)|\text{Re}(z_1)>0\}$ be the right open half-space of $\mathbb{C}^2$, $S^3_+:=S^3\cap H_+$ the right open half-sphere and $T$ the tangent 3-space to $S^3$ at $(1,0)$. Throughout this section let $L$ be a knot in $S^3$ parametrised by $2\pi$-periodic real-analytic functions $(z_1(t),z_2(t)) = (x_1(t)+iy_1(t),x_2(t)+iy_2(t))$. We state our results for knots, but since a link is a totally tangential $\mathbb{C}$-link if and only if all of its components are totally tangential $\mathbb{C}$-links, all results remain true for links of any number of components.

\begin{lemma}\label{lemma}
The knot $L$ is totally tangential if and only if $\overline{z_1}(t)z_1'(t)+\overline{z_2}(t)z_2'(t)=0$ for all $t\in[0,2\pi]$.
\end{lemma}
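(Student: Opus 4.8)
The statement says $L$ is totally tangential if and only if the Legendrian condition $\overline{z_1}(t)z_1'(t)+\overline{z_2}(t)z_2'(t)=0$ holds. By Rudolph's theorem quoted above, $L$ is totally tangential iff it is real-analytic and Legendrian. Since $L$ is already assumed real-analytic, the whole claim reduces to showing that the displayed equation is exactly the condition that $L$ is Legendrian with respect to the standard contact structure. So the plan is: (1) recall the contact form $\eta = -y_1\,dx_1 + x_1\,dy_1 - y_2\,dx_2 + x_2\,dy_2$; (2) a curve is Legendrian iff its tangent vector lies in $\ker\eta$ at every point, i.e. $\eta(z_1'(t),z_2'(t)) = 0$ for all $t$; (3) show $\eta$ evaluated on the tangent vector equals (the imaginary part of, or a real multiple of) $\overline{z_1}z_1' + \overline{z_2}z_2'$; (4) upgrade "imaginary part vanishes" to "the whole complex expression vanishes" using that $L$ lies on $S^3$.

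Let me sketch steps (3)–(4) more concretely. Writing $z_j = x_j + iy_j$, one computes $\overline{z_j}z_j' = (x_j - iy_j)(x_j' + iy_j') = (x_jx_j' + y_jy_j') + i(x_jy_j' - y_jx_j')$. Summing over $j=1,2$, the imaginary part is exactly $\sum_j (x_jy_j' - y_jx_j') = \eta(z_1',z_2')$, while the real part is $\tfrac12\tfrac{d}{dt}\rho(z_1(t),z_2(t))$ where $\rho = |z_1|^2 + |z_2|^2$. Since $L \subset S^3$ means $\rho(z_1(t),z_2(t)) \equiv 1$, differentiating gives that the real part vanishes identically. Hence $\overline{z_1}z_1' + \overline{z_2}z_2'$ is purely imaginary along $L$, and it equals $i\cdot\eta(\text{tangent vector})$. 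Therefore the tangent vector lies in $\ker\eta$ for all $t$ if and only if $\overline{z_1}(t)z_1'(t) + \overline{z_2}(t)z_2'(t) = 0$ for all $t$, which is precisely the Legendrian condition, and combining with Rudolph's theorem finishes the proof.

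The argument is essentially a direct computation, so there is no deep obstacle; the only thing to be careful about is the logical packaging. One should make explicit that being Legendrian only constrains the \emph{tangential} (imaginary) component of $\overline{z_1}z_1' + \overline{z_2}z_2'$, and that it is precisely the ambient constraint $L \subset S^3$ that forces the \emph{radial} (real) component to vanish automatically — so that the single complex equation in the lemma is equivalent to the single real Legendrian equation. I would also remark that this gives, as a byproduct, a clean way to check the Legendrian condition for explicitly parametrised curves, which is exactly what Algorithm 1 and Theorem~\ref{thm:2} will exploit.
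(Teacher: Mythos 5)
Your proof is correct and follows essentially the same route as the paper: decompose $\overline{z_1}z_1'+\overline{z_2}z_2'$ into its real part (a multiple of $\tfrac{d}{dt}\rho$, which vanishes since $L\subset S^3$) and its imaginary part (the contact form evaluated on the tangent vector), then invoke Rudolph's characterisation of totally tangential links as real-analytic Legendrian links. Your extra care in making explicit that the real part vanishes automatically from the ambient constraint $L\subset S^3$ is a welcome clarification of the same computation.
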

\begin{proof}
The real part of $\overline{z_1}(t)z_1'(t)+\overline{z_2}(t)z_2'(t)$ is equal to $\tfrac{\partial (|z_1(t)|^2+|z_2(t)|^2)}{\partial t}$ and thus its vanishing is equivalent to $L$ lying on a 3-sphere. The imaginary part of $\overline{z_1}(t)z_1'(t)+\overline{z_2}(t)z_2'(t)$ is equal to $x_1'(t)y_1(t)-x_1(t)y_1'(t)+x_2'(t)y_2(t)-x_2(t)y_2'(t)$, which vanishes if and only if the tangent vector to $L$ at each point lies in the complex line tangent to $S^3$ at that point, or, equivalently, if $L$ is tangent to the standard contact structure at every point.
\end{proof}

Now assume that $L$ lies in $S^3_+$ and consider $L'\subset T$, the image of $L$ under radial projection from the origin of $\mathbb{C}^2$ into $T$. It is parametrised, in real coordinates, by $(1,y_1(t)/x_1(t),x_2(t)/x_1(t),y_2(t)/x_1(t))$.  We write $X(t)=x_2(t)/x_1(t)$, $Y(t)=y_2(t)/x_1(t)$, $Z(t)=y_1(t)/x_1(t)$.

\begin{proposition}\label{thm:yuv}
A knot $L$ in $S^3_+$ is a totally tangential $\mathbb{C}$-link if and only if $(X(t),Y(t),Z(t))$ satisfies $Z'+XY'-YX'=0$.
\end{proposition}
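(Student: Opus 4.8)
The plan is to reduce the stated identity to the criterion of Lemma~\ref{lemma} by tracking how the quantity $\overline{z_1}z_1'+\overline{z_2}z_2'$ transforms under the radial projection into $T$. Since $L\subset S^3_+\subset S^3$ and $L$ is parametrised by real-analytic functions, Lemma~\ref{lemma} says that $L$ is totally tangential precisely when $\overline{z_1}(t)z_1'(t)+\overline{z_2}(t)z_2'(t)\equiv 0$. As noted in the proof of that lemma, the real part of this expression equals $\tfrac12\tfrac{\rmd}{\rmd t}\bigl(|z_1(t)|^2+|z_2(t)|^2\bigr)$, which vanishes identically because $|z_1(t)|^2+|z_2(t)|^2\equiv 1$ on $S^3$. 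So the first reduction is: $L$ is totally tangential if and only if $\text{Im}\bigl(\overline{z_1}z_1'+\overline{z_2}z_2'\bigr)\equiv 0$.

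Next I would pass to the complex coordinates of the projected curve. Writing $w_1(t)\defeq z_1(t)/x_1(t)$ and $w_2(t)\defeq z_2(t)/x_1(t)$ — which is legitimate since $x_1(t)>0$ on $S^3_+$ — we have $w_1=1+\rmi Z$ and $w_2=X+\rmi Y$. A one-line computation gives $\text{Im}\bigl(\overline{w_1}w_1'+\overline{w_2}w_2'\bigr)=Z'+XY'-YX'$, so the quantity appearing in the proposition is exactly the imaginary part of $\overline{w_1}w_1'+\overline{w_2}w_2'$.

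It then remains to compare $\overline{w_1}w_1'+\overline{w_2}w_2'$ with $\overline{z_1}z_1'+\overline{z_2}z_2'$. Substituting $w_j=z_j/x_1$, differentiating, and using $|z_1|^2+|z_2|^2\equiv 1$, one gets
\[
\overline{w_1}w_1'+\overline{w_2}w_2'=\frac{1}{x_1^2}\bigl(\overline{z_1}z_1'+\overline{z_2}z_2'\bigr)-\frac{x_1'}{x_1^3}\bigl(|z_1|^2+|z_2|^2\bigr)=\frac{1}{x_1^2}\bigl(\overline{z_1}z_1'+\overline{z_2}z_2'\bigr)-\frac{x_1'}{x_1^3}.
\]
The subtracted term is real, so taking imaginary parts yields $Z'+XY'-YX'=x_1(t)^{-2}\,\text{Im}\bigl(\overline{z_1}z_1'+\overline{z_2}z_2'\bigr)$. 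Since $x_1(t)^{-2}>0$ everywhere on $S^3_+$, the left-hand side vanishes identically in $t$ if and only if $\text{Im}\bigl(\overline{z_1}z_1'+\overline{z_2}z_2'\bigr)\equiv 0$, which by the first paragraph is equivalent to $L$ being totally tangential.

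This is essentially a bookkeeping argument, so I do not expect a genuine obstacle. The one point worth stating carefully is that, because $L$ already lies on $S^3$, the real part of $\overline{z_1}z_1'+\overline{z_2}z_2'$ is automatically zero, which is why a single scalar constraint on $(X,Y,Z)$ suffices to encode the whole totally tangential condition; and clearing the factor $x_1^{-2}$ is valid precisely because we have restricted to the open half-sphere $S^3_+$, where $x_1$ does not vanish.
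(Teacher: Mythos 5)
Your proof is correct and follows essentially the same route as the paper: both verify the identity $Z'+XY'-YX'=x_1^{-2}\,\mathrm{Im}\bigl(\overline{z_1}z_1'+\overline{z_2}z_2'\bigr)$ and then invoke Lemma~\ref{lemma}, using that the real part vanishes automatically on $S^3$ and that $x_1>0$ on $S^3_+$. The only difference is presentational — you organise the computation through the complex quantities $w_j=z_j/x_1$ rather than expanding in real coordinates as the paper does — and your derivation checks out.
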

\begin{proof}
The proposition can be verified by direct calculation. We find
\begin{equation}
Z'+XY'-YX'=\frac{x_1y_1'-x_1'y_1}{x_1^2}+\frac{x_2 (x_1y_2'-y_2x_1')}{x_1^3}-\frac{y_2 (x_1x_2'-x_2x_1')}{x_1^3},
\end{equation}
which vanishes if and only if 
\begin{align}\label{eq:calc}
&(x_1y_1'-x_1'y_1)x_1+x_2 (x_1y_2'-y_2x_1')-y_2 (x_1x_2'-x_2x_1')\nonumber\\
=&(x_1y_1'-x_1'y_1)x_1+x_1x_2 y_2'-x_1y_2 x_2'
\end{align}
vanishes, since $x_1>0$. But the right hand side of Eq.~\eqref{eq:calc} is exactly $x_1\text{Im}(\overline{z_1}(t)z_1'(t)+\overline{z_2}(t)z_2'(t))$, whose vanishing (under the assumption that $L$ lies in $S^3$) is by Lemma \ref{lemma} equivalent to $L$ being totally tangential.
\end{proof}

Proposition \ref{thm:yuv} provides us with a tool to construct totally tangential $\mathbb{C}$-links. We need to find real analytic $2\pi$-periodic functions $X(t)$ and $Y(t)$, so that $Z(t)=\int_{0}^tY(s)X'(s)-X(s)Y'(s)ds$ is also $2\pi$-periodic and such that $(X(t),Y(t),Z(t))$ is a parametrisation of $L$ in $\mathbb{R}^3$. We can then identify $\mathbb{R}^3$ with $T$ and project $L$ radially to $S^3_+$. Since the projection map is real-analytic, the result is then by Proposition \ref{thm:yuv} a totally tangential $\mathbb{C}$-link. Below we describe a procedure that finds the desired functions $X(t)$ and $Y(t)$ as above for any given link type.

Lemma~\ref{lemma} and Proposition~\ref{thm:yuv} also follow directly from the observation that the radial projection map is a (real-analytic) diffeomorphism mapping the standard contact structure on $S^3$ to  the contact structure on $\mathbb{R}^3$ defined via the form $\rmd Z+X\rmd Y-Y\rmd X$, see \cite[Examples 2.1.3 and 2.1.10]{geiges} for the formula for the lower hemisphere and an overall rotation in $\mathbb{R}^3$.


We now describe the algorithm that finds a real-analytic Legendrian representative for any given link type. Following the observation above we want to find a real-analytic parametrisation $(X(t),Y(t),Z(t))$ of $L$ in $T\cong\mathbb{R}^3$ that satisfies $Z'+XY'-YX'=0$. The projection of this curve on $S^3$ is then the desired representative.

The idea of the construction is to start with a parametrisation $(X(t),Y(t))$ of a knot diagram $D$ and to define $Z(t)$ by integerating $YX'-XY'$. The resulting parametric curve $(X(t),Y(t),Z(t))$ is not necessarily a closed loop and the crossings potentially do not have the correct sign. (The crossings might also actually be intersection points). The following lemma helps us fix these issues.

\begin{lemma}\label{lem:circle}
Let $C$ be a circle of radius $r$ parametrised by $(X(t),Y(t))=(r\cos(t)+x,r\sin(t)+y)$ and let $Z(t)=\int_0^tY(s)X'(s)-X(s)Y'(s)\rmd s$. Then $Z(2\pi)=-2\pi r^2$ and $Z(-2\pi)=2\pi r^2$.
\end{lemma}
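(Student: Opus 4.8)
The plan is to compute the integral $Z(2\pi)=\int_0^{2\pi} Y(s)X'(s)-X(s)Y'(s)\,\rmd s$ directly by substituting the explicit parametrisation $(X(t),Y(t))=(r\cos t+x, r\sin t+y)$. First I would record the derivatives $X'(t)=-r\sin t$ and $Y'(t)=r\cos t$. Then the integrand $Y X'-XY'$ expands into four groups of terms: the $r^2$-terms give $-r^2\sin^2 t - r^2\cos^2 t=-r^2$; the terms linear in the centre coordinates give $-rx\sin t - ry\cos t$ combined with cross terms, i.e. contributions of the form $x r'(\cdots)$ that are pure sines and cosines of $t$; and there is no constant term arising from $(x,y)$ alone since both $X'$ and $Y'$ carry a factor $r$. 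Concretely, $YX'-XY' = -r^2 - rx\sin t + ry\cos t$ after collecting (up to checking signs of the cross terms, which is the one routine computation here).

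The key observation is then that all the $t$-dependent pieces are honest trigonometric functions $\sin t$, $\cos t$ with period $2\pi$, hence integrate to zero over $[0,2\pi]$, leaving only the contribution of the constant term $-r^2$. Therefore $Z(2\pi)=\int_0^{2\pi}(-r^2)\,\rmd s=-2\pi r^2$. The same computation over $[0,-2\pi]$, or equivalently using that $Z(-2\pi)=-\int_{-2\pi}^{0}(YX'-XY')\,\rmd s$ and that the trigonometric terms still integrate to zero over a full period, gives $Z(-2\pi)=-(-2\pi)r^2 = 2\pi r^2$.

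There is essentially no obstacle: this is a one-line integral once the integrand is simplified, and the only thing to be careful about is bookkeeping the signs when expanding $(r\sin t+y)(-r\sin t) - (r\cos t+x)(r\cos t)$. It may be worth remarking, for the reader's intuition and for the way the lemma is used in the algorithm, that $-\tfrac12(YX'-XY')$ integrated over a loop is the signed area enclosed by the curve (by Green's theorem), so $Z(2\pi)$ is $2$ times the negative of the signed area of the disk of radius $r$, namely $-2\pi r^2$, with the sign reflecting the orientation of the parametrisation; traversing the circle in the opposite sense (the $-2\pi$ case) flips this sign.
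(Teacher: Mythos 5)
Your proof is correct and matches the paper, whose proof is exactly the direct calculation you carry out (the paper just states ``proved by a direct calculation''). The only slip is in the cross terms, which should collect to $-r^2 - ry\sin t - rx\cos t$ rather than $-r^2 - rx\sin t + ry\cos t$, but as you note they are pure sines and cosines either way and vanish over a full period, so the conclusion stands.
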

\begin{proof}
The lemma is proved by a direct calculation.
\end{proof}

Thus we can insert small circles into the knot diagram, which are used to ``wind up'' or ``wind down'' the $Z$-coordinate, so that we obtain a closed loop $(X(t),Y(t),Z(t))$ with the desired crossing signs. This procedure is described in more detail below. The algorithm can be outlined as follows.

\begin{algorithm}[H]
\caption{Construction of real-analytic Legendrian represetatives}
\begin{steps}
\item Find a $C^1$-parametrisation $(X(t),Y(t))$ of a knot diagram, for example using the algorithm in \cite{bodepoly, semiAK}.
\item Define $Z(t)=\int_{0}^tY(s)X'(s)-X(s)Y'(s)\rmd s$.
\item Check the sign of each crossing and insert circles into the knot diagram as required.
\item Insert an appropriate circle into the knot diagram if $Z$ is not $2\pi$-periodic.
\item Define $(\widetilde{X}(t),\widetilde{Y}(t))$ as the concatenation of arcs from $(X(t),Y(t))$ and the inserted circles.
\item Approximate the curve $(\widetilde{X}(t),\widetilde{Y}(t))$ given by the union of $(X(t),Y(t))$ and the circles by finite Fourier series $(X_{trig}(t),Y_{trig}(t))$.
\item Turn $(X_{trig}(t),Y_{trig}(t))$ into a \textit{balanced parametrisation}.
\item Define $Z_{trig}(t)=\int_{0}^tY_{trig}(s)X_{trig}'(s)-X_{trig}(s)Y_{trig}'(s)\rmd s$.
\item Compute $(x_1(t),y_1(t),x_2(t),y_2(t))=(\tfrac{1}{R},\tfrac{X_{trig}(t)}{R},\tfrac{Y_{trig}(t)}{R},\tfrac{Z_{trig}(t)}{R})$, where $R=\sqrt{1+X_{trig}^2(t)+Y_{trig}^2(t)+Z_{trig}^2(t)}$.
\end{steps}
\end{algorithm}

The individual steps are described in the following subsections, along with explanations of why this procedure produces real-analytic Legendrian representatives.

\subsection{Steps 1--5: Legendrian curves}

We start with a pair of $C^1$, $2\pi$-periodic functions $(X(t),Y(t))$ that parametrise a diagram $D$ of the knot $L$ such that $(X'(t),Y'(t))\neq(0,0)$ for all $t\in[0,2\pi]$. Such a parametrisation in terms of trigonometic functions can for example be found with the methods from \cite{bodepoly, semiAK}. In particular, the arguments from \cite{semiAK} guarantee that every crossing in the diagram only involves two arcs that intersect transversely. A crossing in the diagram thus corresponds to values $t'<t''\in[0,2\pi)$ with $X(t')=X(t'')$ and $Y(t')=Y(t'')$. Denote the values of $t$ at which there is a crossing by $t_j$, $j=1,2,\ldots,2c$, where $c$ is the number of crossings. Note that there are exactly $2c$ such values of $t\in[0,2\pi)$, since every crossing involves exactly two strands. Set $t_0=0$ and $t_{2c+1}=2\pi$. This concludes Step 1. Step 2 is simply the definition of $Z(t)\defeq\int_{0}^tY(s)X'(s)-X(s)Y'(s)\rmd s$, which guarantees that the resulting parametric curve satisfies the differential equation.

In Step 3 we have to check for each crossing of $D$, whether the function $Z$ induces the correct sign. Note that the sign of the crossing is determined by the fact that the overpassing arc has a greater $Z$-coordinate than the underpassing arc. Thus for every crossing we have to compare $Z(t_j)$ and $Z(t_{j'})$, where $(X(t_{j'}),Y(t_{j'})=(X(t_j),Y(t_j))$ and $t_j\neq t_{j'}$. We label the crossings that do not obtain the desired sign by $k=1,2,\ldots,N$ and write $j(k)$, $j'(k)$ for the indices with $t_{j(k)}<t_{j'(k)}$ and the crossing $k$ at $(X(t_{j(k)}),Y(t_{j(k)}))=(X(t_{j'(k)}),Y(t_{j'(k)}))$. For every $k\in\{1,2,\ldots,N\}$ we choose intermediate values $\tau_k\in(t_{j(k)-1},t_{j(k)})$ and $\tau_k'\in(t_{j(k)},t_{j(k)+1})$.

\begin{definition}\label{def:leftof}
Let $C$ be a circle with center $c$ that intersects a parametric curve $D=(X(t),Y(t))$ tangentially in a unique point $(X(t'),Y(t'))$ for some $t'$ with $(X'(t'),Y'(t'))\neq(0,0)$. Then we say that $C$ is on the left of $D$ if $(X'(t'),Y'(t'))\times(c-(X(t'),Y(t')))>0$, where $\times$ denotes the cross-product of 2-dimensional vectors. Otherwise, we say that $C$ is on the right of $D$. 
\end{definition}

For every $k\in\{1,2,\ldots,N\}$ we insert two circles $C_1(k)$, $C_2(k)$ of equal, small radii $r(k)=r_1(k)=r_2(k)$ into the diagram such that
\begin{itemize}
\item $C_1(k)$ intersects $D$ tangentially in a unique point, which is $(X(\tau_k),Y(\tau_k))$.
\item $C_2(k)$ intersects $D$ tangentially in a unique point, which is $(X(\tau_k'),Y(\tau_k'))$. 
\item If the desired crossing sign would be obtained by $Z(t_{j(k)})>Z(t_{j(k)'})$, then $C_1(k)$ is on the right of $D$ and $C_2(k)$ is on the left of $D$. Otherwise $C_1(k)$ is on the left of $D$ and $C_2(k)$ is on the right of $D$.
\item If the desired crossing sign would be obtained by $Z(t_{j(k)})>Z(t_{j(k)'})$, then $C_1(k)$ is oriented clockwise and $C_2(k)$ is oriented counter-clockwise. Otherwise, the orientations are reversed.
\end{itemize}

The first two properties ensure that the union of $(X(t),Y(t))$ and the inserted circles can still be parametrised as a $C^1$-loop in $\mathbb{R}^2$. The last two properties guarantee that by Lemma \ref{lem:circle} we can increase or decrease $Z(t_j(k))$ until it induces the correct sign for the crossing $k$ by traversing $C_1(k)$ a sufficient number of times in the direction of its orientation. By traversing $C_2(k)$ the same number of times we negate the effect of $C_1$, so that the signs of the other crossings are not affected. 

Let $m_k=\left\lfloor\tfrac{|Z(t_{j(k)})-Z(t_{j(k)'})|}{2\pi r(k)^2}\right\rfloor+1$. Then $m_k$ is a sufficient choice for the number of times that $C_1(k)$ and $C_2(k)$ each have to be traversed.

Likewise in Step 4 we have to check if $Z$ is $2\pi$-periodic. Choose a value $\tau_{N+1}\in(t_{2c},2\pi)$. If $Z(0)\neq Z(2\pi)$, we insert a small circle $C$ into the diagram such that
\begin{itemize}
\item $C$ intersects $D$ tangentially in a unique point $(X(\tau_{N+1},Y(\tau_{N+1}))$.
\item If $Z(0)>Z(2\pi)$, then $C$ is on the right of $D$ and oriented clockwise. Otherwise it is on the left of $D$ and oriented counter-clockwise.
\item The radius $r$ of $C$ is such that $\tfrac{|Z(0)-Z(2\pi)|}{2\pi r^2}$ is a natural number $m$.
\end{itemize}

We set $\tau_0'=0$ and $\tau_{N+1}'=2\pi$. Then the $\tau_k$s with $k=1,2,\ldots,N,N+1$ and $\tau_k'$s with $k=0,1,2,\ldots,N,N+1$, split the loop $(X(t),Y(t))$ into $2N+2$ arcs given by $A_k=(X(t),Y(t))$, $k=1,2,\ldots,N,N+1$, with $t$ going from $\tau_{k-1}'$ to $\tau_{k}$ and $B_k=(X(t),Y(t))$, $k=1,2,\ldots,N,N+1$, with $t$ going from $\tau_k$ to $\tau_k'$. Step 5 defines a new parametric curve $(\widetilde{X}(t),\widetilde{Y}(t))$ as the concatenation of the arcs $A_k$ and $B_k$ and the circles $C_1(k)$, $C_2(k)$ and $C$. In the partial groupoid of parametrised paths in $\mathbb{R}^2$, we can write the loop $(\widetilde{X}(t),\widetilde{Y}(t))$ as $(\prod_{k=1}^N A_kC_1(k)^{m_k}B_kC_2(k)^{m_k})A_{N+1}C^mB_{N+1}$. This would correspond to a loop, parametrised by two $2\pi(m+1+\sum_{k=1}^N2m_k)$-periodic functions. We prefer to view $(\widetilde{X}(t),\widetilde{Y}(t))$ as $2\pi$-periodic functions, so a rescaling of the $t$-coordinate is necessary. This concludes Step 5.

As usual we define $\widetilde{Z}(t)\defeq \int_{0}^t\widetilde{Y}(s)\widetilde{X}'(s)-\widetilde{X}(s)\widetilde{Y}'(s)\rmd s$.
\begin{lemma}\label{lem:iso}
The functions $(\widetilde{X}(t),\widetilde{Y}(t),\widetilde{Z}(t))$ parametrise a simple loop in $\mathbb{R}^3$ as $t$ goes from $0$ to $2\pi$. For every sufficiently small $\varepsilon>0$ any smooth $\varepsilon$-close approximation of $(\widetilde{X}(t),\widetilde{Y}(t),\widetilde{Z}(t))$ (with respect to the $C^1$-norm) is ambient isotopic to the desired knot $L$.
\end{lemma}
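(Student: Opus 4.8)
The plan is to prove the two assertions in turn: that $(\widetilde X,\widetilde Y,\widetilde Z)$ is an embedded loop, and that every sufficiently $C^1$-close approximation of it is ambient isotopic to $L$. I would begin by checking that $(\widetilde X,\widetilde Y,\widetilde Z)$ is a closed $C^1$ curve with nowhere vanishing velocity. In the plane, $(\widetilde X,\widetilde Y)$ is the concatenation of the arcs $A_k,B_k$ carved out of the closed diagram $D$ together with the inserted circles, glued at their tangency points; the tangency conditions in Definition~\ref{def:leftof} make these junctions $C^1$, so $(\widetilde X,\widetilde Y)$ is a closed regular $C^1$ curve, and the same holds for $(\widetilde X,\widetilde Y,\widetilde Z)$ since $\widetilde Z$ is a $C^1$ antiderivative. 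Closedness of $\widetilde Z$ is exactly the point of Steps 3--4: each pair $C_1(k),C_2(k)$ is traversed the same number $m_k$ of times with opposite orientations, so by Lemma~\ref{lem:circle} it contributes $0$ in total to $\int_0^{2\pi}\widetilde Y\widetilde X'-\widetilde X\widetilde Y'\,\rmd t$, while the single circle $C$ of Step 4 is chosen precisely so that its contribution cancels $Z(2\pi)-Z(0)$; hence $\widetilde Z(2\pi)=\widetilde Z(0)$.

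Next I would prove embeddedness. A self-intersection of the space curve projects to a self-intersection of $(\widetilde X,\widetilde Y)$, so it suffices to control the latter. Choosing the tangency parameters $\tau_k,\tau_k'$ (and the one attached to $C$) to be simple points of $D$ distinct from all crossing values, and taking the radii small enough that the inserted circles are pairwise disjoint and meet $D$ only at their tangency points, the only planar self-intersections are (i) the original crossings of $D$, and (ii) the tangency points, which are visited several times because the circle attached there is traversed $m_k$ (resp.\ $m$) times. At a type-(ii) point the corresponding $\widetilde Z$-values differ by nonzero integer multiples of $2\pi r(k)^2$ by Lemma~\ref{lem:circle}, hence are distinct. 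At a type-(i) crossing I would use the bookkeeping fact that the interval $(\tau_k,\tau_k']$ contains the value $t_{j(k)}$ and no other crossing value, so that the pair $C_1(k),C_2(k)$ shifts the $\widetilde Z$-height of one strand of crossing $k$ while leaving the other strand of crossing $k$, and both strands of every other crossing, unchanged; since $m_k$ is chosen with $2\pi r(k)^2 m_k$ exceeding the original height gap, this forces the two $\widetilde Z$-values to be distinct and to realise the prescribed sign, and, as $C$ is inserted after the last crossing value, it disturbs no crossing sign at all. Thus $(\widetilde X,\widetilde Y,\widetilde Z)$ is an embedded $C^1$ loop.

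For the isotopy type, I would identify the diagram of $(\widetilde X,\widetilde Y,\widetilde Z)$ — the curve $(\widetilde X,\widetilde Y)$ with over/under data determined by $\widetilde Z$ — with the diagram of $L$ obtained from $(X,Y,Z)$ after correcting the $N$ wrong crossing signs, modified by the insertion near each tangency point of a ``coil'', i.e.\ a small loop traversed $m_k$ (or $m$) times. Each coil is a local, unknotted feature contained in an arbitrarily small ball disjoint from the rest of the curve, and removing it is realised by an ambient isotopy of $\mathbb{R}^3$ (equivalently, it is a finite sequence of Reidemeister~I moves on the diagram). Deleting all coils leaves exactly the shadow $D$ carrying the correct over/under at every crossing — correct at the $N$ repaired crossings by the construction of Step~3, and at the remaining ones because they were already correct and were not disturbed — so $(\widetilde X,\widetilde Y,\widetilde Z)$ is ambient isotopic to $L$ in $\mathbb{R}^3$. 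Finally, an embedded regular $C^1$ loop has a tubular neighbourhood in which any $C^1$-close loop sits transverse to the meridian disks and is therefore isotopic to the core; this yields the assertion for all $\varepsilon$-close smooth approximations (the same radial projection then carries the picture to $S^3$ if desired).

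I expect the main obstacle to be the claim that the inserted coils genuinely do not change the knot type: one must verify that a small circle traversed $m$ times, lifted to $\mathbb{R}^3$ by the potential $\widetilde Z$ — which need not be monotone along the circle unless the circle encloses the origin — forms an unknotted tangle that can be absorbed by an ambient isotopy supported in a small ball, and one must keep careful track that the winding operations performed at one crossing leave intact the over/under information at all the other crossings, whether already corrected or not yet treated; this is precisely where the combinatorial placement of the $\tau_k,\tau_k'$ discussed above does the work.
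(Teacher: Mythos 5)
Your proposal is correct and follows essentially the same route as the paper: closedness of $\widetilde{Z}$ from the choice of the circle $C$, simplicity by splitting self-intersections into original crossings (which have the prescribed signs) and points over the inserted circles (where Lemma~\ref{lem:circle} gives $\widetilde{Z}$-gaps of $2\pi r^2 n$, $n\neq 0$), removal of the coils by an ambient isotopy plus Reidemeister~I moves, and a tubular-neighbourhood argument for the $C^1$-close approximations. The ``main obstacle'' you flag about the non-monotone lift of the coils is exactly what the paper handles with the isotopy of the spirals depicted in its Figure~\ref{fig:spiral}.
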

\begin{proof}
By inserting the circle $C$ we have ensured that $\widetilde{Z}(0)=\widetilde{Z}(2\pi)$, so that the parametric curve is a loop. An intersection point would either correspond to a crossing in the original curve $(X(t),Y(t))$ or project to a point on the inserted circles. By construction all crossings in the original curve have the desired signs and in particular there are no intersections. By Lemma \ref{lem:circle} the $Z$-coordinates of different points on the curve that project to the same point on an inserted circle differ by $2\pi r^2n$ for some non-zero integer $n$ and $r>0$. In particular, there are no intersection points that project to a point on an inserted circle. Therefore, the loop is simple.

We can apply a small, smooth isotopy to $(\widetilde{X}(t),\widetilde{Y}(t),\widetilde{Z}(t))$, so that the image of the loop is contained in a tubular neighbourhood of $(\widetilde{X}(t),\widetilde{Y}(t),\widetilde{Z}(t))$ and that changes the curve in a neighbourhood of the inserted circles as shown in Figure \ref{fig:spiral}. The small isotopy extends to an ambient isotopy, so that any smooth, sufficiently close approximation to the curve $(\widetilde{X}(t),\widetilde{Y}(t),\widetilde{Z}(t))$ is mapped to an $\varepsilon$-close approximation of the deformed curve in the $C^1$-norm. In particular, it produces the same knot diagram.

Note that all crossings that come from the inserted circles can be removed via Reidemeister moves of the first type. This results in the original diagram $D$ of $L$, so that the approximation of $(\widetilde{X}(t),\widetilde{Y}(t),\widetilde{Z}(t))$ is ambient isotopic to $L$.

\begin{figure}
\centering
\labellist
\Large
\pinlabel $m$ at 750 1150
\pinlabel $m$ at 750 200
\endlabellist
\includegraphics[height=5cm]{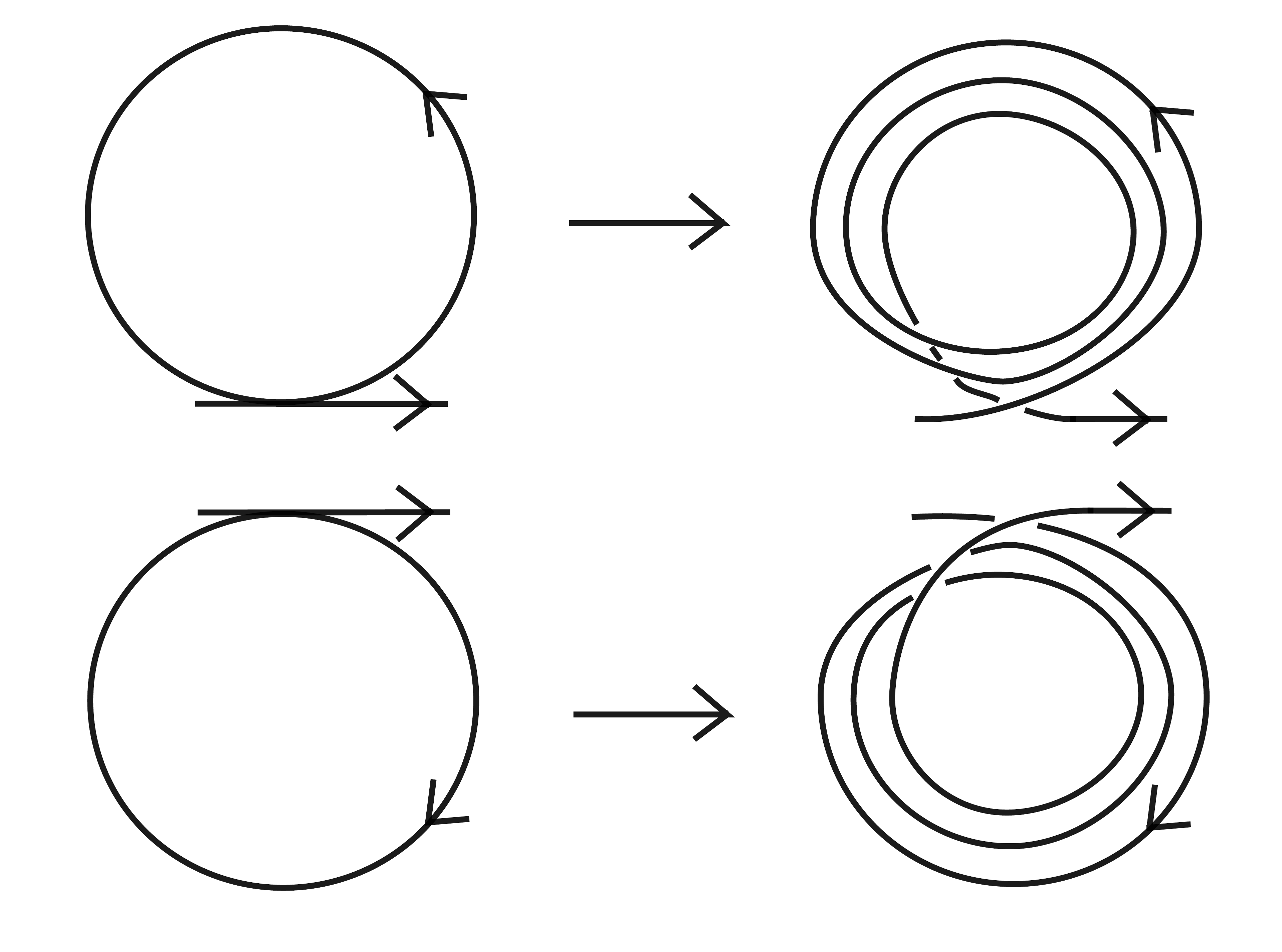}
\caption{An isotopy of the spirals. In this example $m=3$. \label{fig:spiral}}
\end{figure}
\end{proof}

\subsection{Steps 6--9: Fourier series}

In Step 6 we compute a finite Fourier series for $\widetilde{X}(t)+\rmi \widetilde{Y}(t)$, i.e., we calculate the first $n$ terms of its Fourier series for some sufficiently large $n$. The resulting finite Fourier series $X_{trig}(t)+\rmi Y_{trig}(t)$ is an $\varepsilon$-approximation of $\widetilde{X}(t)+\rmi \widetilde{Y}(t)$ in the $C^1$-norm, where $\varepsilon>0$ can be chosen arbitrarily small by increasing $n$. We do not expect a straighforward procedure that would calculate sufficient values for $n$. In practice, we have to start with some arbitrary value $n$, extract a Gauss code from its parametrisation and check (for example using pyknotid \cite{sandy}, SnapPy \cite{snappy} and/or other mathematical software) whether the knot is equivalent to $L$.  If it is not, we have to increase $n$ and repeat the process. 

We could define $Z_{trig}(t)$ as the integral over $Y_{trig}X_{trig}'-X_{trig}Y_{trig}'$ and obtain an $\varepsilon$-approximation to $\widetilde{Z}(t)$ with respect to the $C^1$-norm. However, $Z_{trig}$ is in general not $2\pi$-periodic.
\begin{lemma}\label{lem:balance}
Let $X_{trig}(t)=a_0+\sum_{j=1}^n(a_j\cos(jt)+b_j\sin(jt))$, $Y_{trig}(t)=c_0+\sum_{j=1}^n(c_j\cos(jt)+d_j\sin(jt))$. Then $Z_{trig}(2\pi)=2\pi\sum_{j=1}^n j(a_j d_j-b_j c_j)$.
\end{lemma}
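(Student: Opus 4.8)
The plan is to evaluate $Z_{trig}(2\pi)=\int_0^{2\pi}\bigl(Y_{trig}(t)X_{trig}'(t)-X_{trig}(t)Y_{trig}'(t)\bigr)\,\rmd t$ directly, by expanding the integrand as a trigonometric polynomial and integrating it term by term. First I would differentiate, obtaining $X_{trig}'(t)=\sum_{j=1}^n j\bigl(b_j\cos(jt)-a_j\sin(jt)\bigr)$ and $Y_{trig}'(t)=\sum_{j=1}^n j\bigl(d_j\cos(jt)-c_j\sin(jt)\bigr)$, so that both $Y_{trig}X_{trig}'$ and $X_{trig}Y_{trig}'$ become finite linear combinations of products $\cos(jt)\cos(kt)$, $\cos(jt)\sin(kt)$, $\sin(jt)\sin(kt)$, together with the constants $a_0,c_0$ multiplied by single cosines and sines. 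The only input needed is then the orthogonality relations $\int_0^{2\pi}\cos(jt)\cos(kt)\,\rmd t=\int_0^{2\pi}\sin(jt)\sin(kt)\,\rmd t=\pi\,\delta_{jk}$ for $j,k\ge 1$, $\int_0^{2\pi}\sin(jt)\cos(kt)\,\rmd t=0$, and $\int_0^{2\pi}\cos(jt)\,\rmd t=\int_0^{2\pi}\sin(jt)\,\rmd t=0$.

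Applying these, every term coming from a constant factor, and every term involving two factors of different frequency, integrates to zero; for each $j$ only the diagonal products $\cos^2(jt)$ and $\sin^2(jt)$ survive, each contributing a factor $\pi$. Collecting the surviving terms gives $\int_0^{2\pi}Y_{trig}X_{trig}'\,\rmd t=\pi\sum_{j=1}^n j\bigl(b_jc_j-a_jd_j\bigr)$ and $\int_0^{2\pi}X_{trig}Y_{trig}'\,\rmd t=\pi\sum_{j=1}^n j\bigl(a_jd_j-b_jc_j\bigr)$, and subtracting the second from the first yields the claimed closed form for $Z_{trig}(2\pi)$ in terms of the Fourier coefficients.

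I do not expect any real obstacle: this is a routine orthogonality computation, and the only thing requiring care is the bookkeeping of signs and index ranges. As a sanity check I would specialise to the single circle of Lemma~\ref{lem:circle}, where $a_1=d_1=r$ and all other positive-frequency coefficients vanish, and compare the two formulas — this also pins down the overall sign. Finally, I would note a more conceptual viewpoint as a remark: by Green's theorem the integral $\int_0^{2\pi}\bigl(X_{trig}Y_{trig}'-Y_{trig}X_{trig}'\bigr)\,\rmd t$ equals twice the signed area enclosed by the plane curve $t\mapsto(X_{trig}(t),Y_{trig}(t))$, and if one writes $X_{trig}+\rmi Y_{trig}=\sum_k\gamma_k\rme^{\rmi kt}$ that area is the classical expression $\pi\sum_k k|\gamma_k|^2$; re-expanding $|\gamma_k|^2$ in the real coefficients $a_j,b_j,c_j,d_j$ recovers the formula for $Z_{trig}(2\pi)$ and makes the role of the weights $j$ transparent.
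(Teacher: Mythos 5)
Your proposal is correct and is exactly the "direct calculation using identities of integrals of trigonometric functions" that the paper leaves unwritten: differentiate, expand, and kill everything but the diagonal terms by orthogonality. One point deserves attention: your two intermediate integrals are right, but subtracting them gives $Z_{trig}(2\pi)=2\pi\sum_{j=1}^n j(b_jc_j-a_jd_j)$, which is the \emph{negative} of the formula as stated in the lemma, not "the claimed closed form"; your own sanity check with the circle $(a_1=d_1=r)$ confirms this, since it reproduces $Z(2\pi)=-2\pi r^2$ in agreement with Lemma~\ref{lem:circle}, whereas the stated formula would give $+2\pi r^2$. So the computation is right and the lemma statement carries a sign typo — harmless for the paper, since only the vanishing (and absolute value) of $\sum_j j(a_jd_j-b_jc_j)$ is ever used.
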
 
\begin{proof}
This is a direct calculation using identities of integrals of trigonometric functions.
\end{proof}

\begin{definition}
We say that $(X_{trig}(t),Y_{trig})$ as above is a balanced parametrisation if $\sum_{j=1}^n j(a_j d_j-b_j c_j)=0$.
\end{definition}
It follows immediately that $(X_{trig}(t),Y_{trig})$ is a balanced parametrisation if and only if $Z_{trig}(t)$ is $2\pi$-periodic. If this is not the case, we have to change the functions $(X_{trig}(t),Y_{trig}(t))$ in some controlled way to turn them into a balanced parametrisation.

We know that $Z_{trig}$ is an arbitrarily close approximation to $\widetilde{Z}$, which satisfies $\widetilde{Z}(0)=\widetilde{Z}(2\pi)=0$. Thus for any $\varepsilon<0$ we can choose $n$ sufficiently large so that $|Z_{trig}(2\pi)|=|2\pi\sum_{j=1}^n j(a_j d_j-b_j c_j)|=\varepsilon'$ for some $0\leq\varepsilon'<\varepsilon$. If $\varepsilon'=0$, then $Z_{trig}$ is $2\pi$-periodic and we are done.

If $\varepsilon'>0$ we have to change $(X_{trig}(t),Y_{trig}(t))$. Since $\varepsilon'>0$ there is a $j\in\{1,2,\ldots,n\}$ such that $a_j d_j-b_j c_j\neq 0$ and thus one of the four coefficients must be non-zero. Say $a_j$ is non-zero. Then we replace $d_j$ in $Y_{trig}$, the Fourier approximation of $\widetilde{Y}$, by $(d_j)'\defeq\tfrac{-\varepsilon'+2\pi j d_j a_j}{2\pi j a_j}$ (Step 7). If $a_j=0$, there is a different coefficient out of $b_j$, $c_j$ and $d_j$ that is non-zero and there is an analogous replacement procedure. By abuse of notation we still refer to the resulting new pair of functions as $(X_{trig}(t),Y_{trig}(t))$. Using this new finite Fourier series to define $Z_{trig}$ (Step 8) results by Lemma \ref{lem:balance} in a balanced parametrisation and a $2\pi$-periodic function $Z_{trig}$ (again abusing notation).

Step 9 is then simply the application of the inverse of the radial projection map to the curve $(X_{trig}(t),Y_{trig}(t),Z_{trig}(t))$.

\begin{lemma}
The curve given by $(x_1(t),y_1(t),x_2(t),y_2(t))$ with $t$ going from $0$ to $2\pi$ is a real-analytic Legendrian representative of $L$.
\end{lemma}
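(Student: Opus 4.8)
The plan is to check in turn that the curve $\gamma(t)=(x_1(t),y_1(t),x_2(t),y_2(t))$ delivered by Step 9 lies on $S^3$ and is Legendrian there, is real-analytic, and has knot type $L$; each property follows by assembling ingredients already established. From the formula in Step 9 one reads off $x_1^2+y_1^2+x_2^2+y_2^2=(1+X_{trig}^2+Y_{trig}^2+Z_{trig}^2)/R^2=1$, so $\gamma$ lies on $S^3$, and $x_1=1/R>0$ places it in the open half-sphere $S^3_+$. The inverse radial projection applied in Step 9 is set up precisely so that $x_2/x_1=X_{trig}$, $y_2/x_1=Y_{trig}$, $y_1/x_1=Z_{trig}$, and Step 8 defines $Z_{trig}(t)=\int_0^t Y_{trig}X_{trig}'-X_{trig}Y_{trig}'\,\rmd s$, whence $Z_{trig}'+X_{trig}Y_{trig}'-Y_{trig}X_{trig}'\equiv 0$. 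By Proposition~\ref{thm:yuv} (equivalently, because radial projection is a real-analytic contactomorphism taking the standard contact structure on $S^3$ to the one given by $\rmd Z+X\rmd Y-Y\rmd X$ on $\mathbb{R}^3$), $\gamma$ is everywhere tangent to the standard contact structure, i.e.\ it is Legendrian; and it closes up because the balanced-parametrisation condition arranged in Step 7 is, by Lemma~\ref{lem:balance}, exactly what makes $Z_{trig}$ — hence $R$, hence every coordinate of $\gamma$ — $2\pi$-periodic.

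Real-analyticity is immediate: the radicand $1+X_{trig}^2+Y_{trig}^2+Z_{trig}^2$ is a trigonometric polynomial that is bounded below by $1$, so $R$ is real-analytic and nowhere zero, and each of $1/R$, $X_{trig}/R$, $Y_{trig}/R$, $Z_{trig}/R$ is a real-analytic $2\pi$-periodic function of $t$. (It is in any case forced a posteriori, since a totally tangential $\mathbb{C}$-link is real-analytic and Legendrian by Rudolph's theorem.)

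To see that $\gamma$ represents $L$, I would first verify that $(X_{trig},Y_{trig},Z_{trig})$ approximates $(\widetilde X,\widetilde Y,\widetilde Z)$ to within $\varepsilon$ in the $C^1$-norm once $n$ is large enough. Step 6 makes $X_{trig}+\rmi Y_{trig}$ arbitrarily $C^1$-close to $\widetilde X+\rmi\widetilde Y$; the balancing step modifies a single Fourier coefficient of $Y_{trig}$ (or of $X_{trig}$) by an amount of the form $\varepsilon'/(2\pi j a_j)$, which tends to $0$ together with $\varepsilon'$, hence with $n$, provided the modified coefficient is chosen at a frequency $j$ at which $\widetilde X+\rmi\widetilde Y$ has a nonzero (therefore bounded-below) Fourier coefficient — such a $j$ exists because $\widetilde X+\rmi\widetilde Y$ is non-constant; and then $Z_{trig}$, the integral of $Y_{trig}X_{trig}'-X_{trig}Y_{trig}'$ with $Z_{trig}(0)=0=\widetilde Z(0)$, is $C^1$-close to $\widetilde Z$ as well. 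By Lemma~\ref{lem:iso}, $(X_{trig},Y_{trig},Z_{trig})$ is ambient isotopic in $\mathbb{R}^3$ to $L$. Identifying $\mathbb{R}^3$ with $T$ and applying the inverse radial projection $S^3_+\to T$, which is a diffeomorphism, carries this curve to $\gamma$; since diffeomorphisms preserve knot type and a knot contained in $\mathbb{R}^3\subset S^3$ has the same type in $S^3$, it follows that $\gamma$ represents $L$.

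The only genuinely delicate point is this last one: one must keep the Step 7 balancing perturbation small enough in the $C^1$-norm not to disturb the diagram that was verified in Step 6, which is what forces the careful choice of the modified coefficient above. Everything else is routine bookkeeping built on Proposition~\ref{thm:yuv}, Lemma~\ref{lem:balance}, and Lemma~\ref{lem:iso}.
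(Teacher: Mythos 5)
Your proof is correct and follows essentially the same route as the paper: Proposition~\ref{thm:yuv} for the Legendrian property, real-analyticity of finite Fourier series composed with the (nonvanishing-denominator) inverse radial projection, and Lemma~\ref{lem:iso} combined with $C^1$-closeness of $(X_{trig},Y_{trig},Z_{trig})$ to $(\widetilde X,\widetilde Y,\widetilde Z)$ for the isotopy type. Your added remark that the balanced-parametrisation perturbation must be applied at a frequency whose Fourier coefficient stays bounded away from zero as $n$ grows is a legitimate refinement of a point the paper passes over more quickly, but it does not change the argument.
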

\begin{proof}
Since the functions $X_{trig}$, $Y_{trig}$ and $Z_{trig}$ are given by finite Fourier series, they are real-analytic. The projection map $(X_{trig}(t),Y_{trig}(t),Z_{trig}(t))\mapsto (x_1(t),y_1(t),x_2(t),y_2(t))$ given explicitly in Step 9 of the outline of Algorithm 1 is also real-analytic. Thus $(x_1(t),y_1(t),x_2(t),y_2(t))$ is real-analytic.

The functions $X_{trig}$, $Y_{trig}$ and $Z_{trig}$ were constructed such that $Z_{trig}+X_{trig}Y_{trig}'-Y_{trig}X_{trig}'=0$, which by Proposition \ref{thm:yuv} implies that $(x_1(t),y_1(t),x_2(t),y_2(t))$ parametrises a Legendrian knot.

By Lemma \ref{lem:iso} we only need to show that $(X_{trig}(t),Y_{trig}(t),Z_{trig}(t))$ can be taken to be an $\varepsilon$-approximation of $(\widetilde{X}(t),\widetilde{Y}(t),\widetilde{Z}(t))$ in the $C^1$-norm. The sum of the first $n$ terms of the Fourier series of $\widetilde{X}+\rmi \widetilde{Y}$ converges in the $C^1$-norm to $\widetilde{X}+\rmi \widetilde{Y}$ as $n$ goes to infinity and $X_{trig}(t)+\rmi Y_{trig}(t)$ differs from this function (at most) in one term only. In the Step 7 the coefficient $d_j$ has been replaced by $d_j'=\tfrac{\varepsilon'-2\pi j d_j a_j}{2\pi j a_j}$ (or analogous substitution for $a_j$, $b_j$ or $c_j$). We have $|d_j-d_j'|=\tfrac{\varepsilon'}{2\pi j |a_j|}$, which can be made arbitrarily small by choosing $n$ (the degree of the Fourier approximation) large enough. It follows that $X_{trig}$ and $Y_{trig}$ can be constructed to be arbitrarily close approximations of $\widetilde{X}$ and $\widetilde{Y}$, respectively. The corresponding statement for $Z_{trig}$ and $\widetilde{Z}$ follows from their definitions. It follows from Lemma~\ref{lem:iso} that the curve is ambient isotopic to $L$.
\end{proof}

Note that while the algorithm produces real-analytic parametrisations of Legendrian links of given link type, we do not consider different Legendrian link types of the same topological link type here. That is to say, at the moment there is no controlled way to construct one specific real-analytic Legendrian link type (out of the infinitely many Legendrian link types that represent a given topological link type).

\section{Example: The figure eight knot}\label{sec:ex}
In this section we use our construction to obtain a trigonometric parametrisation of a Legendrian link that is ambient isotopic to the figure-eight knot. As a consequence, the resulting  link on $S^3$ is a totally tangential representative of the isotopy class of the figure-eight knot. A mathematica file with the complete computations is available on the author's webpage \cite{page}.

A parametrisation of a knot diagram of the figure-eight knot is given by 
\begin{equation}\label{eq:41curve}
(X(t),Y(t))=(\cos(3t)(2+\cos(2t)),\sin(4t)+1/4 \sin(2t)),
\end{equation}
where $t$ is going from $0$ to $2\pi$. This particular parametrisation is found by taking a lemniscate parametrisation of the figure-eight knot as a starting point \cite{lemniscate}.
The parametric curve and the knot diagram are displayed in Figure~\ref{fig:diagrams41}a) and b).

\begin{figure}
\centering
\labellist
\Large
\pinlabel a) at 30 2800
\pinlabel b) at 30 1900
\pinlabel c) at 30 950
\endlabellist
\includegraphics[height=11.5cm]{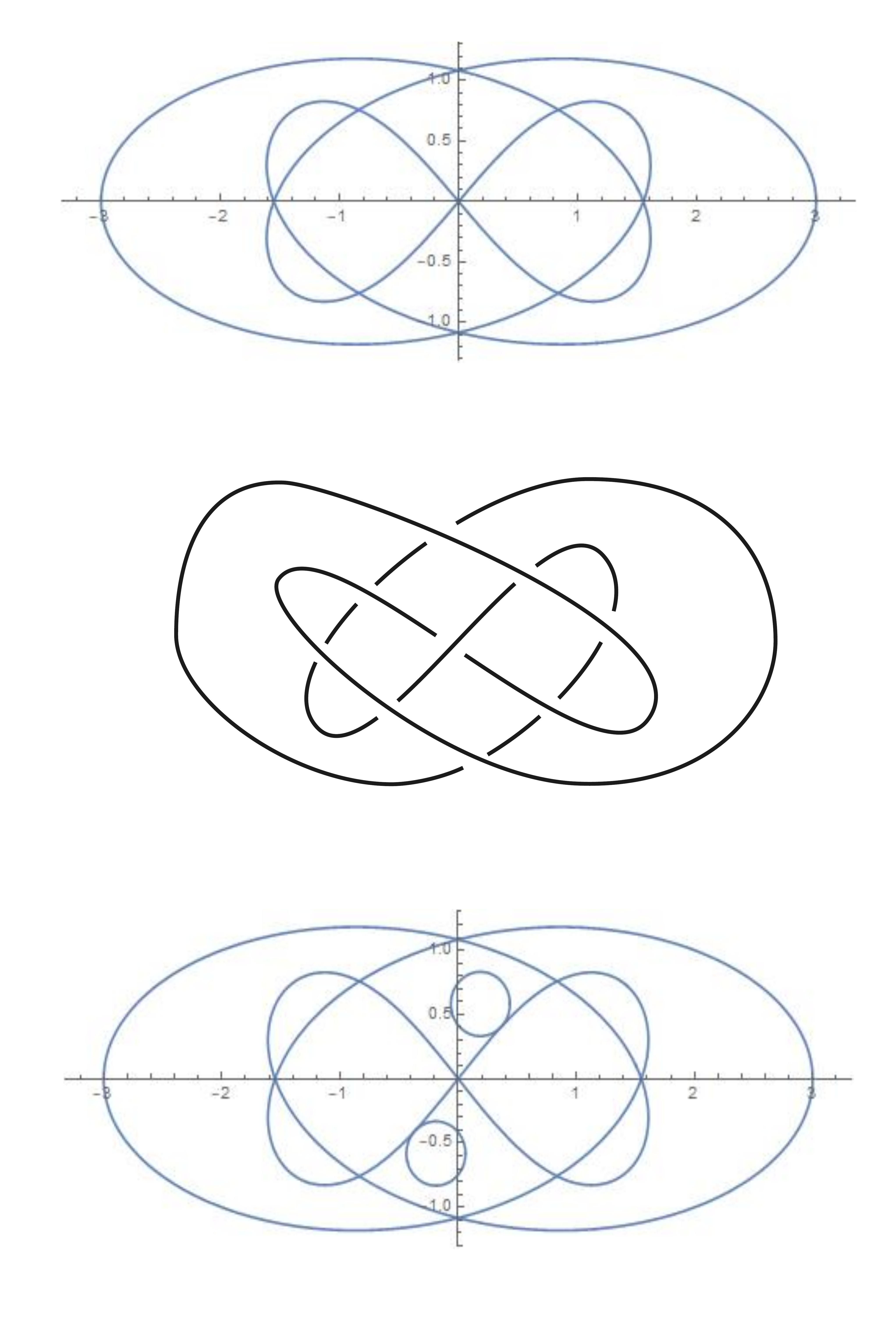}
\caption{a) The planar curve parametrised by Eq.~\eqref{eq:41curve}. b) A knot diagram of the figure-eight knot. c) The union of the planar parametric curve and the inserted circles $C_1$ and $C_2$. \label{fig:diagrams41}}
\end{figure}

We define $Z(t)=\int_{0}^tY(s)X'(s)-X(s)Y'(s)\rmd s$. For every crossing we check if $Z(t)$ induces the correct crossing sign. For example, the crossing at $(0,\tfrac{5\sqrt{3}}{8})$ obtained at $t=\tfrac{\pi}{6}$ and $t=\tfrac{7\pi}{6}$ has the correct sign, because 
\begin{equation}
Z\left(\tfrac{\pi}{6}\right)=-\tfrac{7493}{1260}<\tfrac{7493}{1260}=Z\left(\tfrac{7\pi}{6}\right).
\end{equation}

In fact, all crossings have the correct sign except for the crossing at $(0,0)$, which occurs at $t=\tfrac{\pi}{2}$ and $t=\tfrac{3\pi}{2}$. Following the algorithm we have to insert small circles in the diagram as shown in Figure~\ref{fig:diagrams41}c). Since $Z(0)=0=Z(2\pi)$, the two circles $C_1$ and $C_2$ before and after $t=\tfrac{\pi}{2}$ are the only ones that need to be inserted. We may for example pick $\tau_1=1.44996$ and $\tau_1'=1.69164$ (which are chosen conveniently because at both of these points the tangent to $(X(t),Y(t))$ has slope 1). As can be seen in Figure~\ref{fig:diagrams41}c) the value $\tau_1$ lies between the crossing at $(0,0)$ and the previous crossing (with respect to the orientation determined by the parametrisation) and $\tau_1'$ lies between the crossing at $(0,0)$ and the next crossing.

The circles $C_1$, $C_2$ are parametrised as
\begin{align}
&(c_{1,1}(t),c_{1,2}(t)):=\left(\frac{1}{4}\cos(-t+\varphi_1),\frac{1}{4}\sin(-t+\varphi_1)\right)+(U(\tau_1),V(\tau_1))+N(\tau_1)\nonumber\\
=&\left(\frac{1}{4}\cos(-t+3\pi/4)+U(\tau_1)-\frac{1}{4}\cos(3\pi/4),\frac{1}{4}\sin(-t+3\pi/4)+V(\tau_1)-\frac{1}{4}\sin(3\pi/4)\right),\\
&(c_{2,1}(t),c_{2,2}(t)):=\left(\frac{1}{4}\cos(-t+\varphi_1'),\frac{1}{4}\sin(-t+\varphi_1')\right)+(U(\tau_1'),V(\tau_1'))+N(\tau_1')\nonumber\\
&=\left(\frac{1}{4}\cos(t+7\pi/4)+U(\tau_1')-\frac{1}{4}\cos(7\pi/4),\frac{1}{4}\sin(t+7\pi/4)+V(\tau_1')-\frac{1}{4}\sin(7\pi/4))\right),
\end{align}
respectively, with $t$ going from 0 to $2\pi$. The vectors $N(\tau_1)$ and $N(\tau_1')$ are normal vectors of $(X(\tau_1),Y(\tau_1))$ and $(X(\tau_1'),Y(\tau_1'))$, respectively, of unit length, whose direction is determined by the desired positioning of the circles $C_1$ and $C_2$. For example, if the circle $C_1$ is supposed to be on the left of the curve $(X(t),Y(t))$ as in Definition~\ref{def:leftof}, then $N(\tau_1)$ should point to the left of the curve (with respect to the orientation induced by the parametrisation). The angles $\varphi_1$ and $\varphi_1'$ are the angles of $N(\tau_1)$ and $N(\tau_1')$, respectively, when interpreted as points in $\mathbb{R}^2$ in polar coordinates (with the center of the circle as the origin). By construction the circles $C_1$ and $C_2$ intersect the parametric curve $(X(t),Y(t))$ tangentially at $t=\tau_1$ and $t=\tau_1'$, respectively, and both have radius $1/4$. 

By Lemma~\ref{lem:circle} traversing the circle $C_1$ once would result in an increase of the $Z$-coordinate of $\pi/8$. Likewise, traversing $C_2$ once decreases the $Z$-coordinate by $\pi/8$. In order to obtain the correct crossing signs, $C_1$ and $C_2$ have to be traversed 
\begin{align}
m&=\left\lfloor\left|Y(\pi/2)-Y(3\pi/2)\right|8/\pi\right\rfloor+1\nonumber\\
&=\left\lfloor\left|-10.06032-10.06032\right|8/\pi\right\rfloor+1=52
\end{align} 
times each.

This way the concatenation of arcs of the original curve $(X(t),Y(t))$ and $C_1$ and $C_2$ forms a loop that is the image of a $C^1$-curve. It has the piecewise parametrisation: 

\begin{align}
\widetilde{X}(t)&\defeq\begin{cases}X(t) & \text{if }t\in[0,\tau_1]\\
c_{1,1}(t-\tau_1) & \text{if } t\in[\tau_1,\tau_1+2m\pi]\\
X(t-\tau_1-2m\pi ) & \text{if } t\in[\tau_1+2m\pi, \tau_1'+\tau_1+2m\pi]\\
c_{2,1}(t-\tau_1'-\tau_1-2m\pi) & \text{if } t\in[\tau_1'+\tau_1+2m\pi,\tau_1'+\tau_1+4m\pi]\\
X(t-\tau_1'-\tau_1-4m\pi) & \text{if } t\in[\tau_1'+\tau_1+4m\pi,2\pi(2m+1)]\end{cases}\nonumber\\
\widetilde{Y}(t)&\defeq\begin{cases}Y(t) & \text{if }t\in[0,\tau_1]\\
c_{1,2}(t-\tau_1) & \text{if } t\in[\tau_1,\tau_1+2m\pi]\\
Y(t-\tau_1-2m\pi ) & \text{if } t\in[\tau_1+2m\pi, \tau_1'+\tau_1+2m\pi]\\
c_{2,2}(t-\tau_1'-\tau_1-2m\pi) & \text{if } t\in[\tau_1'+\tau_1+2m\pi,\tau_1'+\tau_1+4m\pi]\\
Y(t-\tau_1'-\tau_1-4m\pi) & \text{if } t\in[\tau_1'+\tau_1+4m\pi,2\pi(2m+1)]\end{cases}\nonumber\\
\widetilde{Z}(t)&\defeq \int_0^t\widetilde{Y}(s)\widetilde{X}'(s)-\widetilde{X}(s)\widetilde{Y}'(s)\rmd s.
\end{align}
Note that $\widetilde{Z}$ satisfies $\widetilde{Z}(t)=Z(t)$ for all $t\in[0,\tau_1]$, $\widetilde{Z}(t)=Z(t-2m\pi)+m\pi/8$ for all $t\in[\tau_1+2m\pi,\tau_1'+\tau_1+2m\pi]$, and $\widetilde{Z}(t)=Z(t-4m\pi)$ for all $t\in[\tau_1'+\tau_1+4m\pi,2\pi(2m+1)]$.

By construction $(\widetilde{X}(t),\widetilde{Y}(t),\widetilde{Z}(t))$ is a parametrisation of the figure-eight knot. It is the image of a $C^1$-curve, but not necessarily smooth. In particular, this parametrisation is not real-analytic, it is only piecewise real-analytic.





        

After rescaling the $t$-variable, we may take all functions to be $2\pi$-periodic. Using Mathematica we can calculate a Fourier expansion of $\widetilde{X}$ and $\widetilde{Y}$ that is a sufficiently $C^1$-close approximation. In particular, if sufficiently many Fourier terms are included the Fourier series $X_{trig}$ and $Y_{trig}$ parametrise a knot diagram of the figure-eight knot and $Z_{trig}(t)$, defined as usual via $Z_{trig}=\int_0^tY_{trig}(s)X_{trig}'(s)-X_{trig}(s)Y_{trig}'(s)\rmd s$, gives the desired crossing signs. In our example, we need a Fourier sum of degree 185. 

A priori, $Z_{trig}$ is not necessarily $2\pi$-periodic, i.e., $(X_{trig}(t),Y_{trig}(t))$ is not necessarily a balanced parametrisation. In our example we have $Z_{trig}(2\pi)\approx-0.00087\neq 0=Z_{trig}(0)$. But as explained in the previous section we can fix this by changing one of the Fourier coefficients of $X_{trig}$ or $Y_{trig}$. In our example, changing the coefficient $d_{185}$ to $(d_{185})':=\tfrac{-Z_{trig}(2\pi)+370\pi d_{185}a_{185}}{370\pi a_{185}}$ gives the desired property for $Z_{trig}$. Note that this implies that $Z_{trig}(t)$ is also a (real-valued) finite Fourier sum, i.e., a trigonometric polynomial (of degree $370$). Thus $(X_{trig}(t),Y_{trig}(t),Z_{trig}(t))$, $t\in[0,2\pi]$, is a real-analytic parametrisation of a figure-eight knot in $\mathbb{R}^3$ and its image in $S^3$ is a real-analytic Legendrian figure-eight knot with respect to the standard contact structure.

Figure~\ref{fig:fig8knot} shows the curve in $\mathbb{R}^3$ and the planar curve $(X_{trig}(t),Y_{trig}(t))$, $t\in[0,2\pi]$. As we can see it is a very close approximation of the original diagram curve $(X(t),Y(t))$, $t\in[0,2\pi]$, in Figure~\ref{fig:diagrams41}

\begin{figure}
\includegraphics[height=10cm]{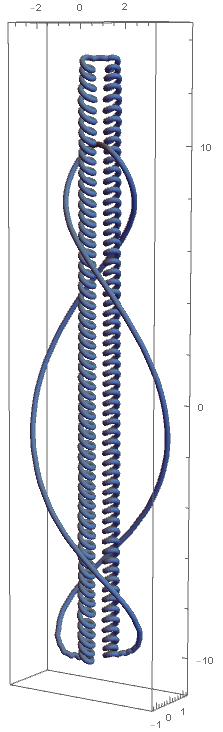}
\includegraphics[height=2.5cm]{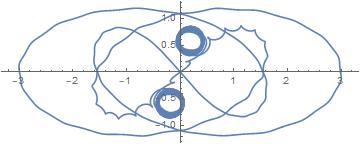}
\caption{The figure-eight knot that is the result of the algorithm and its projection in the plane.\label{fig:fig8knot}}
\end{figure}

\section{A variation of the algorithm}\label{sec:variations}
Even though the figure-eight knot is a comparatively simple knot, we needed 370 Fourier terms in our parametrisation in the previous section. We will see later that the degree of the trigonometric polynomials that parametrise a knot determine the size of the system of linear equations that needs to be solved in order to find the holomorphic functions that would produce examples of totally-tangential $\mathbb{C}$-links or electromagnetic knots. We should therefore try to adapt Algorithm 1, so that it produces parametrisations in terms of trigonometric polynomials of lower degree.

A variation of the algorithm above takes place in $\mathbb{R}^3$ with its standard contact structure $\xi_{1}$ defined via $\alpha_1=\rmd z+x\rmd y$ as opposed to the (equivalent) contact structure $\xi_2$ defined via $\rmd Z+X\rmd Y-Y\rmd X$, which came from the projection of the standard contact structure on (the upper hemisphere of) $S^3$. As in \cite[Example 2.1.3]{geiges}, there is a real-analytic diffeomorphism
\begin{equation}
\varphi(x,y,z)=\left(\frac{x+y}{2},\frac{y-x}{2},z+\frac{xy}{2}\right)
\end{equation}
that maps $(\mathbb{R}^3,\xi_1)$ to $(\mathbb{R}^3,\xi_2)$. Thus if we can find a real-analytic Legendrian knot in $(\mathbb{R}^3,\xi_1)$, we can map it to $(\mathbb{R}^3,\xi_2)$ and then to $S^3$ to obtain the desired space curve there.

Every Legendrian knot $(x(t),y(t),z(t))$ in $(\mathbb{R}^3,\xi_1)$ can be represented by a front projection $(y(t),z(t))$, a type of knot diagram without vertical tangencies, where the $x$-coordinate (the direction outside of the diagram plane) is determined by $x(t)=-z'(t)/y'(t)$. Searching real-analytic Legendrian knots thus means essentially looking for real-analytic front projections.

Since the $x$-coordinate is determined by the slope $-z'/y'$, the sign of a crossing in a front projection is such that the overpassing strand has smaller slope. (Note that there is no definite convention on the definition of the contact form of the standard contact structure on $\mathbb{R}^3$. Practically any choice of signs and permutations of the coordinates can be found in the literature. In particular, for other forms such as $\rmd z-x\rmd y$, it is the larger slope that belongs to the overpassing strand. We use the definitions from \cite{geiges}.)

As in the original algorithm we may start with a parametrisation of a given knot type, found for example via the methods in \cite{bodepoly, semiAK}. In general, the $y$- and $z$-coordinates of the curve do not provide a front projection. There might be vertical tangencies and the crossing signs induced by the slope might be different from the desired crossing signs.

We may delete a neighbourhood of any point vertical tangent from the diagram and replace it with a parametric arc that has a cusp instead of the vertical tangent and that extends the diagram curve to a $C^1$ piecewise-smooth curve. In Mathematica such parametric curves can be found with the command InterpolatingPolynomial. Note that any trigonometric polynomial can be interpreted as a complex polynomial restricted to the unit circle. We only have to specify the coordinates of the endpoints of the diagram curve (once the neighbourhood of the point with vertical tangent is removed), the values of the derivatives of the coordinate functions at these endpoints and the desired values of the interpolating polynomial and its first three derivatives at the cusp point. As the name of the command suggests, each coordinate of the resulting parametrisation is given by a polynomial with specified values on points $\rme^{\rmi t}$ on the unit circle, which corresponds to a trigonometric polynomials with specified values on points in the interval $[0,2\pi]$.

Likewise, if a crossing has the ``wrong'' sign in the sense that we require a strand with smaller slope to pass over a strand with bigger slope, we can delete a small arc in a neighbourhood of the crossing and replace it by a ``zigzag''. This arc has two cusps and is chosen such that it has a unique crossing with the rest of the diagram and the sign of the crossing is the desired one. As in the previous case, it should extend the diagram curve to a $C^1$ curve. Again we may use the same command InterpolatingPolynomial. This procedure is not quite algorithmic in the sense that it is not immediate that the inserted arc does not intersect the rest of the diagram in more points.

An example for the figure-eight knot is shown in Figure~\ref{fig:frontproj}. Here, the signs of the crossings as induced by the slope are correct and produce the figure-eight knot. Therefore, it is not necessary to introduce zigzags. However, we need to replace the vertical tangent points with cusps.

\begin{figure}
\centering
\labellist
\Large
\pinlabel a) at 0 330
\pinlabel b) at 450 330
\endlabellist
\includegraphics[height=4cm]{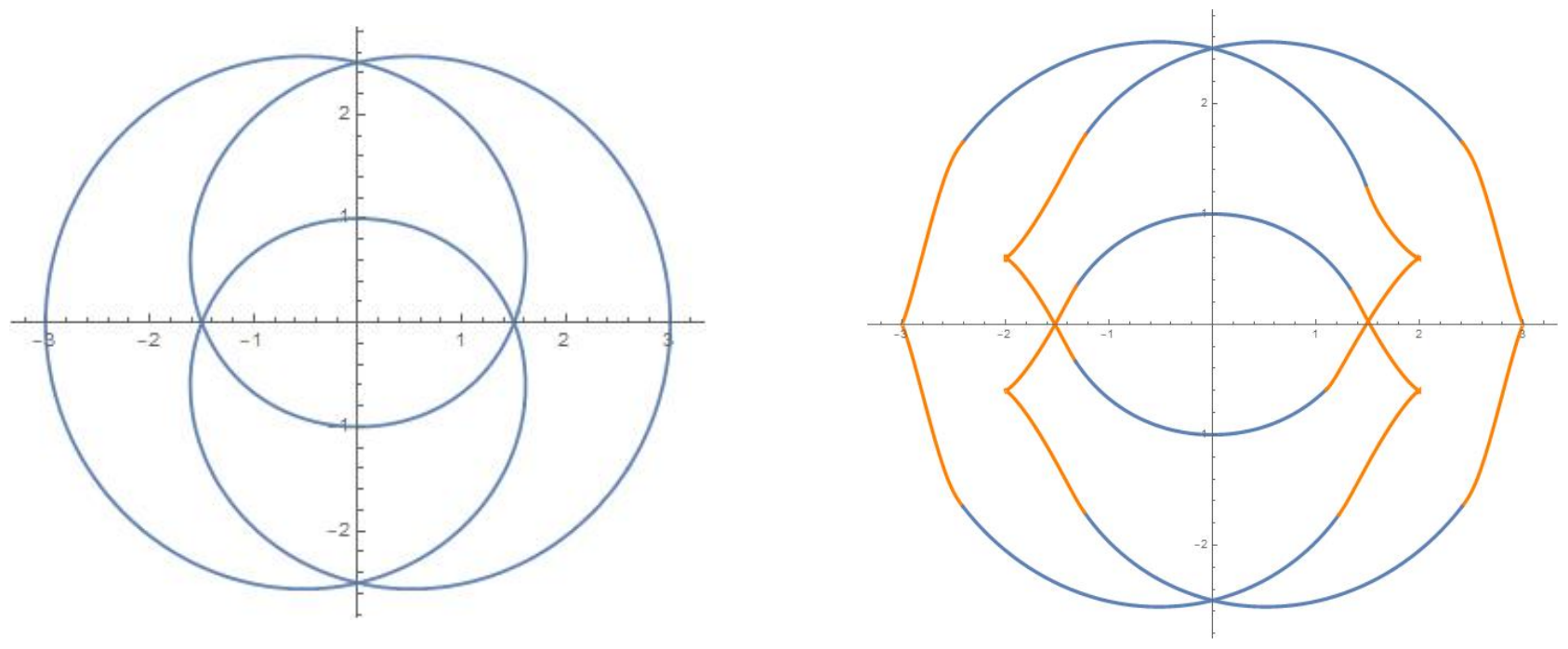}
\caption{a) A projection of the figure-eight knot into the plane, i.e., a knot diagram without specified crossing signs. b) A planar curve that corresponds to a front projection of the figure-eight knot, defined piecewise via a trigonometric parametrisation of a knot diagram (in blue) and interpolating trigonometric polynomials (in orange).\label{fig:frontproj}}
\end{figure}

We thus obtain a $C^1$, piecewise smooth diagram curve $(y(t),z(t))$ such that $x(t)=-z'(t)/y'(t)$ defines the $x$-coordinate of a $C^1$, piecewise smooth parametrisation $(x(t),y(t),z(t))$, $t\in[0,2\pi]$, of a Legendrian knot in $(\mathbb{R}^3,\xi_1)$.

We can apply the map $\varphi$ to obtain a parametrisation of a Legendrian knot in $(\mathbb{R}^3,\xi_2)$ in coordinates $(X,Y,Z)$, approximate $(X(t),Y(t))$ by finite Fourier sums $(X_{trig}(t),Y_{trig}(t))$ and define $Z_{trig}(t)$ as in the previous sections. We can then follow Steps 7-9 from the algorithm to obtain a real-analytic Legendrian knot in $S^3$.

One reason why we can perhaps expect this method to result in lower Fourier degrees is the following argument. The original algorithm required the insertion of several small circles into the diagram in order to ``wind up'' or ``wind down'' the $Z$-coordinate. These circles had to be rather small so that they do not intersect the rest of the diagram. This implies that they have to be traversed many times in order to increase or decrease the $Z$-coordinate by the sufficient amount, resulting in significant contributions to Fourier terms of large degree. Since the variation of the algorithm using the front projection does not require the insertion of such small circles, we might be able to get away with fewer terms.

In practice, this method has its disadvantages though. In the original algorithm we only needed to calculate the Fourier transforms of functions that were piecewise trigonometric polynomials. Here we are dealing with piecewise functions of quotients of trigonometric polynomials, which are a lot harder to compute numerically. (We let such a computation for the figure-eight knot run for several days on a standard laptop and eventually gave up.)

Another big issue is the numerical accuracy in the definition of $x(t)=-z'(t)/y'(t)$. In a neighbourhood of the cusp, $(y(t),z(t))$ are interpolating trigonometric polynomials that are chosen such that they locally look like $y^3\pm z^2=0$. In particular, at the cusp we require that $y'(t)=z'(t)=0$. By prescribing the values of the higher order derivatives of $y$ and $z$ at that point, we can guarantee that $x$ is well-defined. In practice however, the interpolation is only achieved to the degree of numerical accuracy. Instead of $y'(t)=z'(t)=0$, we might have functions with $y'(t)\approx10^{-22}$ and $z'(t)\approx10^{-20}$, resulting in $|x(t)|\approx100$ instead of $x(t)=0$, or $y'(t)=0$ and $z'(t)\approx 10^{-20}$, resulting in an error, since the definition of $x$ requires division by $0$.

In this sense, the algorithm from the previous sections is a lot more reliable, even though it produces parametrisations of very high degree. In practice, we may look at the piecewise parametrisation of the front projection and attempt to find a Fourier approximation in coordinates $(X,Y,Z)$ by trial and error. This way we obtained the following parametrisation of the figure-eight knot, which has degree 11:
\begin{align} \label{eq:best_fig8}
X(t)&=2.4 \cos(3t)+\cos(t)\nonumber\\
Y(t)&=(1.5 \cos(t)^2 + \tfrac{2}{3} \sin(t)^2) (1.5 \cos(6 t - 1) + \sin(6 t))\nonumber\\
Z(t)&=-1.4183 \cos(t) - 3.3015 \cos(3 t) - 1.1110 \cos(5 t) -  0.4511 \cos(7 t)\nonumber\\
& - 0.4169 \cos(9 t) - 0.0921 \cos(11 t) - 3.9589 \sin(t) - 9.2153 \sin(3 t)\nonumber\\
& - 3.1011 \sin(5 t) - 1.2590 \sin(7 t) - 1.1636 \sin(9 t) - 0.2571 \sin(11 t),
\end{align}
where the coefficients of $Z(t)$ are rounded. Recall that the algorithm produced a Fourier parametrisation of degree 370. We were able to reduce the degree significantly. However, the procedure is not really algorithmic and becomes less feasible for more complicated knots.

Figure~\ref{fig:fig8constr} shows the constructed curve.

\begin{figure}[H]
\centering
\includegraphics[height=4cm]{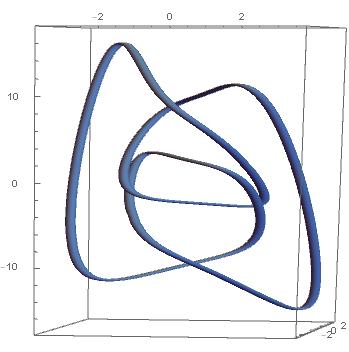}
\caption{The constructed real-analytic Legendrian figure-eight knot of Fourier degree 11.\label{fig:fig8constr}}
\end{figure}

It is worth mentioning that the search for "simple" trigonometric parametrisations of knots is of independent interest \cite{lissajous, fourier}. Here we are dealing with the additional challenge that we require the knots to be Legendrian.

\section{Systems of linear equations for totally tangential $\mathbb{C}$-links and electromagnetic knots}\label{sec:linear}

We prove Theorem~\ref{thm:2}, which asserts that we can find system of linear equations, whose solutions produce candidates for explicit examples of the functions defining totall tangential $\mathbb{C}$-links and electromagnetic knots, respectively. In the case of totally tangential knots, solutions to the equations always exist, but the corresponding functions do not necessarily satisfy all conditions in Definition~\ref{def:tt}. For electromagnetic knots, we show that the solutions do satisfy all necessary properties, but it is not known if the systems always have solutions.

\begin{proof}[Proof of Theorem~\ref{thm:2}:]
$i)$ We assume that $L$ is a knot. Since $L$ is an output of Algorithm 1, it is a Legendrian link with respect to the standard contact structure on $S^3$ and is given as a parametric curve via $(x_1(t),y_1(t),x_2(t),y_2(t))=(\tfrac{1}{R},\tfrac{X_{trig}(t)}{R},\tfrac{Y_{trig}(t)}{R},\tfrac{Z_{trig}(t)}{R})$, where $R=\sqrt{1+X_{trig}^2(t)+Y_{trig}^2(t)+Z_{trig}^2(t)}$ and $X_{trig}$, $Y_{trig}$ and $Z_{trig}$ are real trigonometric polynomials.

Consider now a polynomial $G:\mathbb{C}^2\to\mathbb{C}$, $G(z_1,z_2)=\sum_{i,j=0}^nc_{i,j}z_1^iz_2^j$, $c_{i,j}\in\mathbb{C}$, and evaluate it on $L$. We write $z_j(t)=x_j(t)+\rmi y_j(t)$, $j=1,2$, for the parametrisation of $L$ in complex coordinates. Then $G|_L=0$ if and only if 
\begin{align}\label{eq:linear}
\tilde{G}(t)&:=R^{2n}G(z_1(t),z_2(t))\nonumber\\
&=\sum_{i,j=0}^nc_{i,j}(1+\rmi X_{trig}(t))^i(Y_{trig}(t)+\rmi Z_{trig}(t))^jR^{2n-i-j}=0
\end{align}
for all $t\in[0,2\pi]$.

Suppose now that $c_{i,j}=0$ for all $i,j$ with odd $i+j$. Then $\tilde{G}(t)$ is a trigonometric polynomial with respect to the variable $t$. Suppose that $m$ is the maximum of the degrees of the trigonometric polynomials $X_{trig}$, $Y_{trig}$ and $Z_{trig}$. Then the degree of the term $z_1(t)^iz_2(t)^jR^{2n-i-j}$ is at most $mi+mj+m(2n-i-j)=2mn$. Thus the degree of the trigonometric polynomial $\tilde{G}(t)$ is at most $2mn$. 

The space of trigonometric polynomials of degree at most $2mn$ is a vector space with basis $1$, $\cos(t)$, $\sin(t)$, $\cos(2t)$, $\sin(2t)$, ... $\cos(2mn)$, $\sin(2mn)$. Expressing $\tilde{G}(t)$ in this basis gives
\begin{equation}
\tilde{G}(t)=A_0+\sum_{k=1}^{2mn}A_k\cos(kt)+B_k\sin(kt),
\end{equation}
where each of the coefficients $A_k$, $B_k$ is a linear expression in the $c_{i,j}$s. Note that Eq.~\eqref{eq:linear} is satisfied if and only if all of these coefficients $A_k$, $k=0,1,2,\ldots,2mn$, $B_k$, $k=1,2,\ldots,2mn$, are zero. 

Therefore, the polynomial $G$ vanishes on $L$ if its coefficients $c_{i,j}$ are equal to 0 for odd values for $i+j$ and the coefficients $c_{i,j}$ with even values of $i+j$ solve a homogeneous system of $4mn+1$ linear equations. The matrix $A$ mentioned in the statement of the theorem is thus the matrix corresponding to this system of linear equations.

By definition $G$ is a polynomial of degree at most $2n$. The coefficient of the unique monomial that realises that degree is $c_{n,n}$. There are thus $(n+1)^2$ coefficients $c_{i,j}$. We know that by assumption all coefficients $c_{i,j}$ with odd $i+j$ vanish. This leaves us with $\left\lceil\frac{(n+1)^2}{2}\right\rceil$ tuples $(i,j)$ with $i+j$ even, where $\lceil\cdot\rceil$ is the ceiling function that maps a number $x$ to the smallest integer that is greater than or equal to $x$. Since these coefficients $c_{i,j}$ with $i+j$ even are the variables in the system of linear equations, we have $\left\lceil\frac{(n+1)^2}{2}\right\rceil$ variables.

For sufficiently large values of $n$, specifically, when $n>\lfloor4m-1+\sqrt{2}\sqrt{4m-1+8m^2}\rfloor$, the number of variables $\left\lceil\frac{(n+1)^2}{2}\right\rceil$ becomes strictly larger than the number of linear equations $4mn+1$. It follows from the Rank-Nullity-Theorem that for these values of $n$ there are non-trivial solutions to the matrix equation. Since the solutions are coefficients $c_{i,j}$ of $G$, this implies that for every sufficiently large $n$ there is a complex polynomial (that is not constant 0) of degree at most $n$ that vanishes on $L$.

If $L$ has multiple components, given in terms of a set of parametric curves from Algorithm 1, we can run through the same argument for each component. The desired matrix $A$ is the result of stacking the rows from the matrices from the different components on top of each other, so that the size of the matrix is $(1+4n\sum_{k=1}^\ell m_k)\times \left\lceil\frac{(n+1)^2}{2}\right\rceil$, where $\ell$ is the number of components of $L$ and $m_k$ is the maximum degree of $X_{trig}$, $Y_{trig}$ and $Z_{trig}$ of the $k$th component of $L$.

$ii)$ Again we first focus on one component. The following idea already appears in \cite{bode}. Let $L$ is a real-analytic Legendrian link and $X_{(z_1,z_2)}$, $(z_1,z_2)\in L$ a non-vanishing section of the tangent bundle of $L$. Then we can express the tangent vector $X_{(z_1,z_2)}$ at $(z_1,z_2)$ in terms of the basis 
\begin{align}
v_1&=-x_2\partial_{x_1}+y_2\partial_{y_1}+x_1\partial_{x_2}-y_1\partial_{y_2},\\
v_2&=-y_2\partial_{x_1}-x_2\partial_{y_1}+y_1\partial_{x_2}+x_1\partial_{y_2}
\end{align}
of the contact plane at that point and define the complex-valued function $H:L\to\mathbb{C}$ via 
\begin{equation}
H(z_1,z_2)=X_{(z_1,z_2)}\cdot v_2+\rmi X_{(z_1,z_2)}\cdot v_1.
\end{equation} 
Any holomorphic function $h:\mathbb{C}^2\to\mathbb{C}$ with $h|_L=H$ defines a Bateman field of Hopf type that has $L$ as a set of closed field lines of its magnetic part \cite{bode}. Small variations of this argument allow for different components of $L$ to form magnetic field lines and other components to form electric field lines \cite{bode}.

Suppose now that $L$ is an output of Algorithm 1 and $h:\mathbb{C}^2\to\mathbb{C}$, $h(z_1,z_2)=\sum_{i,j=0}^nc_{i,j}z_1^iz_2^j$ is a complex polynomial. Using the same idea as above in $i)$, we want to find the coefficients $c_{i,j}$ such that the equation
\begin{equation}\label{eq:heq}
\tilde{h}(t):=R^{2n}h(z_1(t),z_2(t))=R^{2n}H(z_1(t),z_2(t))
\end{equation} 
is satisfied for all $t\in[0,2\pi]$. If we choose 
\begin{equation}
X_{(z_1(t),z_2(t))}:=x_1'(t)\partial_{x_1}+y_1'(t)\partial_{y_1}+x_2'(t)\partial_{x_2}+y_2'\partial_{y_2},
\end{equation}
then $H$ is (as a function of $t$) the quotient of a trigonometric polynomial and $R^4$, so that the right hand side of Eq.~\eqref{eq:heq} is a trigonometric polynomial if $n\geq 2$. Likewise, if we assume that all coefficients $c_{i,j}$ with $i+j$ odd are equal to zero, then the left hand side becomes the trigonometric polynomial
\begin{equation}
\tilde{h}(t)=\underset{i+j\text{ even}}{\sum_{i,j=0}^n}c_{i,j}(1+\rmi X_{trig}(t))^i(Y_{trig}(t)+\rmi Z_{trig}(t))^jR^{2n-i-j}.
\end{equation}
Once again, we obtain in Eq.~\eqref{eq:heq} an equation of trigonometric polynomials. Each basis element in the vector space of trigonometric polynomials of a certain bounded degree thus results in an inhomogeneous linear equation in the variables $c_{i,j}$. Note that the matrix corresponding to this system of linear equations is exactly the matrix $A$ from part $i)$ above.

We write $y$ for the vector that expresses the trigonometric polynomial $R^{2n}H(z_1(t),z_2(t))$ in terms of the basis elements as in $i)$. We take the coefficients $c_{i,j}$ with $i+j$ even as coordinates of a vector $x$. Then the above implies that the coordinates of solutions to $Ax=y$ are the coefficients $c_{i,j}$ of a complex polynomial $h$ that defines a Bateman field of Hopf type of the desired topology.

Again, for multiple components we obtain a set of linear equations for each component, which can be combined into one larger system of linear equations. The difference to $i)$ is that now the system of linear equations is inhomogeneous, so in general we do not know if there is a choice of $n$ such that a solution exists.
\end{proof}

The relation between the holomorphic functions $G$ and $h$ that define a real-analytic Legendrian link as a totally-tangential $\mathbb{C}$-link and an electromagnetic knot, respectively, has already been the object of speculation in \cite{bode}. Note that for the (only) known examples of torus knots, we have $h(z_1,z_2)=\tfrac{\partial^2G(z_1,z_2)}{\partial z_1\partial z_2}$. Calculations in the last section of \cite{tori} suggest that we should not expect this particular relation to hold in general. However, the proof of Theorem~\ref{thm:2} establishes a connection of a different type. If both functions are assumed to be polynomials, then their coefficients are solutions to a homogeneoues and an inhomogeneous system of linear equations, respectively, with both systems corresponding to the same matrix $A$ and the inhomogeneous part of the equation determined by the knot in question.

Recall that the Legendrian parametrisation that we found for the figure-eight knot in Section~\ref{sec:ex} has Fourier degree 370. In order to guarantee a solution to the corresponding system of linear equations we thus need $n>\lfloor4m-1+\sqrt{2}\sqrt{4m-1+8m^2}\rfloor$. With $m=370$ this results in $n>2958$. The resulting system of linear equations then consists of more than 4 milion equations in more than 4 milion variables. 

Even for the simpler parametrisation that we found in Section~\ref{sec:variations} we still have $m=11$, $n>86$ and a system of 3829 linear equations in 3872 variables. This illustrates several things. First of all, in order to construct totally tangential $\mathbb{C}$-links it is important to find Fourier parametrisations of the lowest degree possible. Secondly, the solution of the system linear equations that produces the coefficients of the desired complex plane curve $G$ requires substantial computing power. As a further consequence the degree of the found polynomial will be very large (174 for the figure-eight knot of Fourier degree 11 and 5918 for the figure-eight knot of Fourier degree 370). Considering that the figure-eight is a comparatively simple knot, this could explain why no examples beyond the torus knots have been found so far (although it is theoretically possible that a given knot arises from a polynomial of lower degree).

\section{Dynamics of electromagnetic knots}\label{sec:dynamics}

In this section we describe the motion of knots that are closed field lines of Bateman electromagnetic fields of Hopf type. At first, this might seem like a challenging problem, since we do not know any explicit expressions for these fields except the known examples of the fields of torus knots. However, we will see that it is not necessary to know the electromagnetic field to study the motion of an electromagnetic knot in a Bateman field of Hopf time. It is sufficient to know the inital curve at $t=0$, which is exactly what Algorithm 1 produces.

There is a smooth family of diffeomorphisms $\Phi_t$ of $\mathbb{R}^3$ such that if $L$ is a set of closed field lines at time $t=0$, then $\Phi_t(L)$ is a set of closed field lines at time $t$. Note that this family of diffeomorphisms, which determines the time evolution of the entire field, is the same for every Bateman field of Hopf type. This can be verified via a direct calculation or by envoking the Robinson congruence \cite{robinson, atiyah, kedia}. It is also apparent from the fact that Bateman fields in $\mathbb{R}^3$ are (up to a scalar factor) projections of time-independent vector fields on $S^3$, where the only thing that depends on time is the projection map $\varphi_{t=t_*}=(\alpha,\beta)|_{t=t_*}^{-1}$. Since the projection maps are the same for every Bateman field, they all evolve using the same ambient isotopy $\Phi_t$. The time-evolution of the link $L$ via $\Phi_t$ can be interpreted as $L$ being dragged along the normalised Poynting vector field $\mathbf{V}=\mathbf{E}\times\mathbf{B}/|\mathbf{E}\times\mathbf{B}|$, which for Bateman fields of Hopf type is equal to 
\begin{align}
\mathbf{V}=&\text{Re}(\nabla\alpha\times\nabla\beta)\times\text{Im}(\nabla\alpha\times\nabla\beta)\nonumber\\
=&\left(\frac{2(x(t-z)+y)}{x^2+y^2+z^2+t^2+1-2tz},\frac{2(y(t-z)+x)}{x^2+y^2+z^2+t^2+1-2tz},\right.\nonumber\\
&\left.\frac{x^2+y^2-z^2-t^2-1+2tz}{x^2+y^2+z^2+t^2+1-2tz}\right).
\end{align}
In other words, the trajectory of each point $p\in L$ traces out a parametric curve, whose velocity vector is the normalised Poynting vector field at its position. In particular, each point on the link moves with the speed of light. For an interpretation of $\mathbf{V}$ as the velocity field of an Euler flow, see \cite{kedia2}.

We prove that no compact subset of $\mathbb{R}^3$ can contain an electromagnetic knot indefinitely.

\begin{proof}[Proof of Theorem~\ref{thm:dynamics}]
Since $L$ is a link formed by a set of closed field lines of $\mathbf{F}$ at time $t=0$, the map $\varphi_0^{-1}=(\alpha,\beta)|_{t=0}:\mathbb{R}^3\to S^3$ maps $L$ to a real-analytic Legendrian link $\tilde{L}$ with respect to the standard contact structure. Since $(\alpha,\beta)|_{t=0}$ extends to a diffeomorphism mapping the point at infinity to $(1,0)\in S^3\subset\mathbb{C}^2$, the link $\tilde{L}$ does not pass through $(1,0)$ and so there is a neighbourhood $U$ of $(1,0)$ in $S^3$ that is disjoint from $\tilde{L}$.

For every point $(x,y,z)\in\mathbb{R}^3$ and every $t\in\mathbb{R}$ we can consider the corresponding preimage point on $S^3$, which is $\varphi_t^{-1}(x,y,z)$. We calculate the limits $\lim_{t\to\infty}\varphi_t^{-1}(x,y,z)$ and $\lim_{t\to-\infty}\varphi_t^{-1}(x,y,z)$. Since $\varphi_t^{-1}$ is the map $(\alpha,\beta)$ for a fixed value of $t$, we obtain $(1,0)\in S^3\subset\mathbb{C}^2$ in both cases, see Eq.~\eqref{eq:alphabeta}. In particular, for every compact subset $K$ of $\mathbb{R}^3$ there is a $T>0$ such that for all $t$ with $|t|>T$ the preimage set $\varphi_t^{-1}(K)$ is contained in $U$ and thus disjoint from $\tilde{L}$. Since the link $\Phi_t(L)$ that is formed by a set of closed field lines of $F$ at time $t$ is equal to $\varphi_t(\tilde{L})$ and $\varphi_t:S^3\to\mathbb{R}^3\cup\{\infty\}$ is a diffeomorphism for all $t$, we have that $K$ and $\Phi_t(L)$ are disjoint for all $t$ with $|t|>T$.
\end{proof}

We don't have an explicit analytic expression for $\Phi_t=\varphi_t\circ\varphi_0^{-1}$, but
\begin{align}
\Phi_t^{-1}&=\varphi_0\circ\varphi_t^{-1}=\varphi_0\circ(\alpha,\beta)\nonumber\\
&=\left(\frac{\text{Re}(\beta)}{1-\text{Re}(\alpha)},\frac{-\text{Im}(\beta)}{1-\text{Re}(\alpha)},\frac{\text{Im}(\alpha)}{1-\text{Re}(\alpha)}\right)\nonumber\\
&=\left(\frac{-2ty+x(x^2+y^2+z^2-t^2+1)}{x^2+y^2+z^2+t^2+1-2tz},\frac{2tx+y(x^2+y^2+z^2-t^2+1)}{x^2+y^2+z^2+t^2+1-2tz},\right.\nonumber\\
&\left.\frac{-t(x^2+y^2+z^2-t^2-1)+z(x^2+y^2+z^2-t^2+1)}{x^2+y^2+z^2+t^2+1-2tz}\right).
\end{align} 
Given a real-analytic Legendrian link $L$ in $S^3$ as a parametric curve, we obtain a parametric curve $\varphi_0(L)$, which arises as a set of closed field lines of some Bateman field of Hopf type. We can then find the preimage of $\varphi_0(L)$ under the map $\Phi_t^{-1}$ numerically, which is the set of closed field lines of the same electromagnetic field at the time $t$. We can thus plot the electromagnetic link at different moments in time without knowledge of the surrounding electromagnetic field and without having to integrate the normalised Poynting vector field.

Figures~\ref{fig:dynamics}a)-g) show such a sequence of snapshots of the motion of the constructed figure-eight knot from Eq.~\eqref{eq:best_fig8}. Note that the curve in Figure~\ref{fig:dynamics}a) is different from Figure~\ref{fig:fig8constr}, which shows the constructed figure-eight knot in $\mathbb{R}^3$. Figure~\ref{fig:dynamics} displays the curve that is the result of projecting the curve in Figure~\ref{fig:fig8constr} into $S^3$ via the inverse of a radial projection map (where $\mathbb{R}^3$ is identified with the tangent space of $S^3$ at $(1,0)$) and then projecting that curve into $\mathbb{R}^3$ via $\varphi_0=((\alpha,\beta)|_{t=0})^{-1}$, which is the stereographic projection followed by a mirror reflection along the $y=0$-plane.

\begin{figure}

\centering
\labellist
\Large
\pinlabel a) at 0 1400
\pinlabel b) at 330 1400
\pinlabel c) at 680 1400
\pinlabel d) at 1120 1400
\pinlabel e) at 150 900
\pinlabel f) at 800 900
\pinlabel g) at 100 430
\pinlabel h) at 860 430
\endlabellist
\includegraphics[height=12cm]{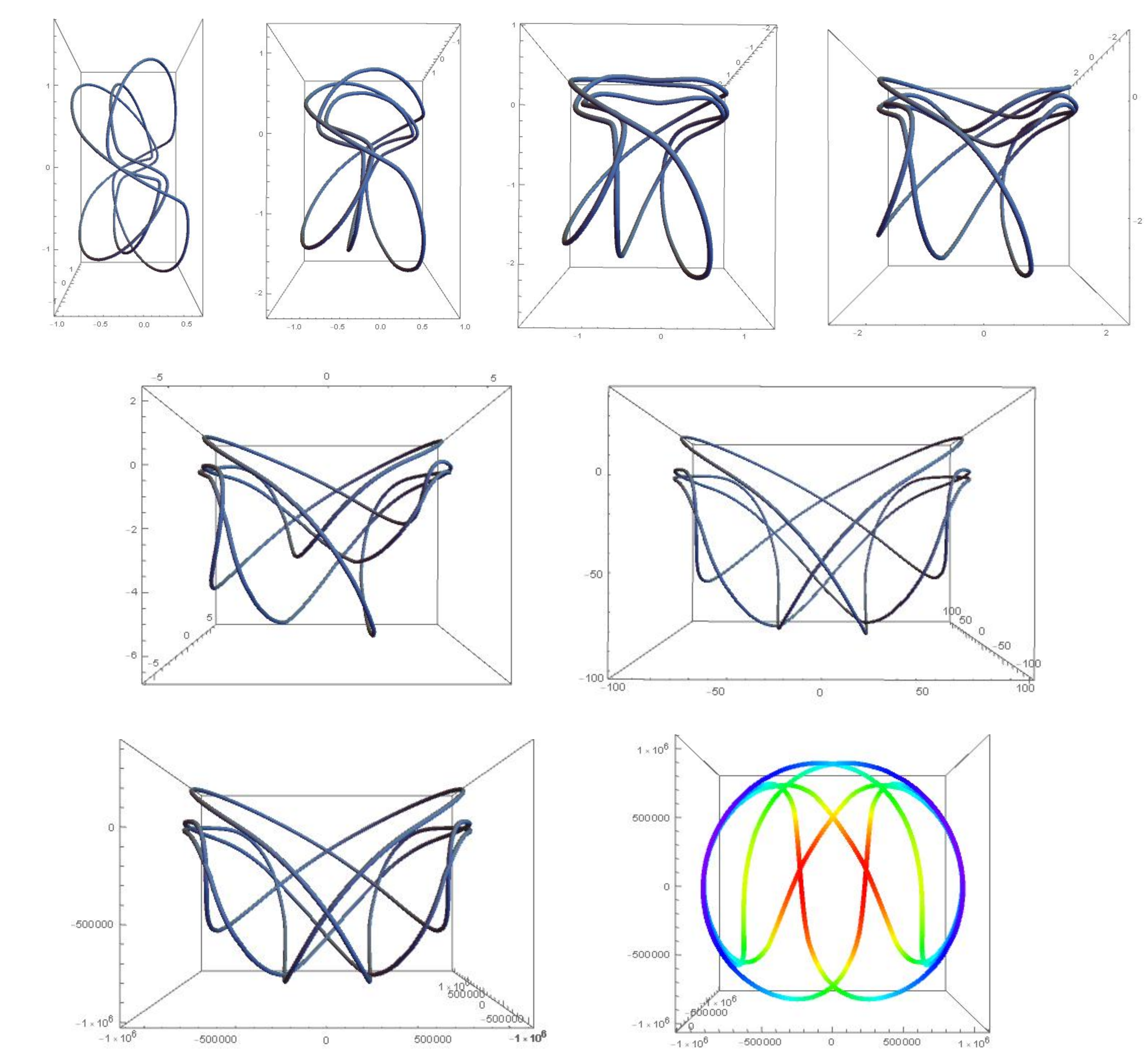}
\caption{The time evolution of an electromagnetic figure-eight knot. a) $t=0$. b) $t=0.5$. c) $t=1$. d) $t=2$. e) $t=5$. f) $t=100$. g) $t=1000000$. h) $t=1000000$ viewed from above and coloured by the $z$-coordinate with red corresponding to a very low numbers. \label{fig:dynamics}}
\end{figure}

There are two dominant aspects of the flow that allow a qualitative description of the general motion of an electromagnetic knot. The first is a downward motion in a growing region around the $z$-axis. Note that for all $t$ the normalised Poynting vector field $\mathbf{V}$ is $(0,0,-1)$, i.e., it points vertically downwards. By continuity, there is a neighbourhood of the $z$-axis, where $\mathbf{V}$ is very close to being vertical at $t=0$. Furthermore, $\mathbf{V}$ converges pointwise to $(0,0,-1)$ as $t$ approaches infinity. Thus the region where $\mathbf{V}$ points almost downwards, say $U_t:=\{(x,y,z)\in\mathbb{R}:|\mathbb{V}-U_t|<\varepsilon\}$ for some small $\varepsilon>0$, grows as $t$ increases. 

The second important contribution moves the knot to larger and larger values of $R$ in cylindrical coordinates $(R,\phi,z)$, i.e., the knot grows radially. It moves further and further away from the $z$-axis. 

For a knot $L=\Phi_0(L)$ that is initially linked with the $z$-axis, i.e., not null-homotopic in $\mathbb{R}^3\backslash\{(0,0,z):z\in\mathbb{R}\}$, a typical motion can be described as follows. The knot grows in the radial direction. (Note the labels on the axes in Figure~\ref{fig:dynamics}.) However, the region $U_t$ also grows, starting from a small neighbourhood of the $z$-axis. If $U_t$ catches up with a part of the knot $\Phi_t(L)$, the dominant direction for that part of the knot becomes downwards. Since the part of the knot $\Phi_t(L)$ that is reached first by the growing $U_t$ is the part that lies closest to the $z$-axis, this means that as $t$ approaches $\infty$, the knot develops the shape of a bowl, i.e., it is close to a curve that lies on the lower half of a sphere. As $t$ approaches $-\infty$, the knot almost lies on the upper half of a sphere.

Figure~\ref{fig:dynamics}h) shows the conformation of the figure-eight knot at $t=1000000$ viewed from above with colors indicating the values of the $z$-coordinate. Red corresponds to very low values (close to $-10^6$). Traversing the colour wheel in positive directions (orange, yellow, green, blue) corresponds to larger and larger values of $z$-coordinate. This demonstrates the ``bowl-shape'' of the figure-eight knot at large values of $t$.

This limit shape is easier to see in a different example, see Figure~\ref{fig:dynamics}. It is again a figure-eight knot, this time of Fourier degree 17. However, the initial shape is more elongated (Figure~\ref{fig:dynamics2}b)), so that the limit shape at large values of $t$ resembles a $U$ (Figure~\ref{fig:dynamics2}c)) and the bowl-shape is more easily visible. For large negative values of $t$, the knot forms an upside-down $U$, see Figure~\ref{fig:dynamics2}a).

\begin{figure}
\centering
\labellist
\Large
\pinlabel a) at 30 640
\pinlabel b) at 490 640
\pinlabel c) at 250 250
\endlabellist
\includegraphics[height=8cm]{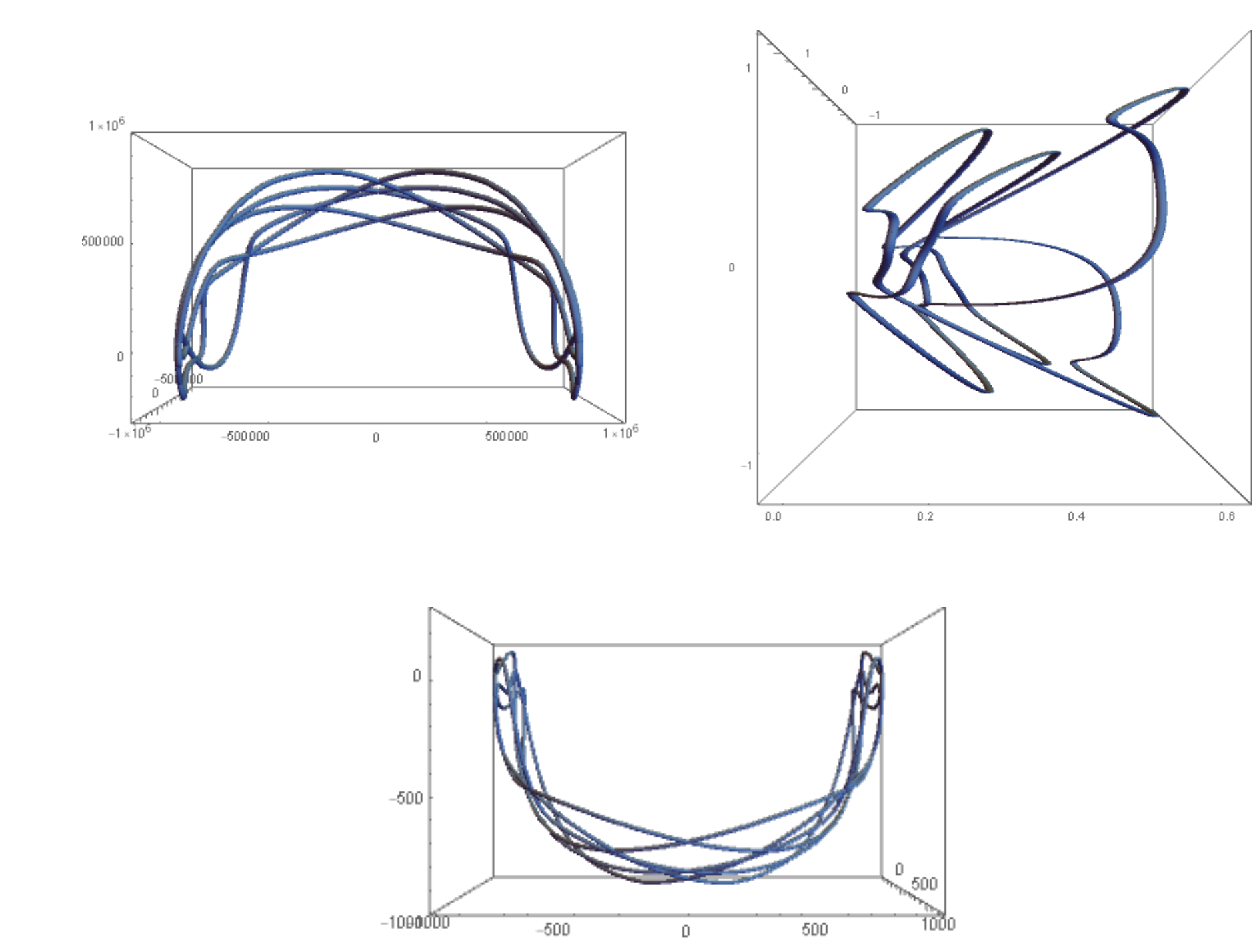}
\caption{The time evolution of an electromagnetic figure-eight knot. a) $t=-1000000$. b) $t=0$. c) $t=1000000$.\label{fig:dynamics2}}
\end{figure}

This was a quite superficial analysis of the limit shape of the knot, but it should hold in general for electromagnetic knots that are initially linked with the $z$-axis. In fact, all curves that we have investigated and that are initially linked with the $z$-axis share this pattern. They start out as very large knots (in a neighbourhood of the point at infinity) in the shape of an upside-down $U$ for large negative values of $t$, or rather, an upside-down bowl. Then they shrink until $t$ is close to $0$. For values of $t$ around 0 interesting geometric changes can occur, which depend hugely on the particular geometry of the curve. Then the knot grows again developing an overall $U$-shape or bowl shape. Both the $U$-shape for large values of $t$ and the upside-down $U$ for large negative values are also apparent in the simulations of electromagnetic torus knots in the supplementary material of \cite{kedia}, although the simulations do not go beyond relatively small values of $|t|$.

\begin{remark} 
Theorem~\ref{thm:dynamics} and the observation that knots in Bateman fields of Hopf type grow to infinity is in contrast to the time evolution of knots in plasma, where the Lorentz force and magnetic pressure cause the knot to shrink as much as possible \cite{moffatt}.

Recall that Bateman fields are solutions to Maxwell's equations in vacuum. In the absence of any charges, there is nothing that confines the knots to small regions, resulting in the growth discussed above.
\end{remark}

\ \\

\noindent
\textbf{Acknowledgments:} The author is grateful to Lee Rudolph and Daniel Peralta Salas for fruitful discussions and helpful explanations and to Oliver Gross and Mitchell Berger for questions and comments.
The research was supported by the European Union's Horizon 2020 research and innovation programme through the Marie Sklodowska-Curie grant agreement 101023017.

\end{document}